\documentclass{amsart}

\usepackage{wasysym}

\usepackage[mathscr]{eucal}

\usepackage{amsmath}
\usepackage{stmaryrd}
\usepackage{amssymb}
\usepackage{amsthm}

\usepackage{amsfonts}
\usepackage[pdftex]{graphicx}
\usepackage{tikz}
\usepackage{enumerate}
\usepackage{subfigure}

\usepackage[markup=colored]{changes}
\setauthormarkuptext{name}
\definechangesauthor[name={JRL}, color=blue]{j}

\newtheorem{theorem}{Theorem}[section]
\newtheorem{lemma}[theorem]{Lemma}
\newtheorem{proposition}[theorem]{Proposition}
\newtheorem{corollary}[theorem]{Corollary}

\newtheorem{problem}[theorem]{Problem}

\theoremstyle{definition}
\newtheorem{definition}[theorem]{Definition}
\newtheorem{example}[theorem]{Example}

\newtheorem{remark}[theorem]{Remark}

\begin{document}
	
	\title[Strongly quasi-pseudometric aggregation functions]{Strongly quasi-pseudometric aggregation functions}
	
	\author[A. Fructuoso-Bonet]{Alejandro Fructuoso-Bonet}
	\address[A. Fructuoso-Bonet]{Programa de Doctorado en Matem\'aticas, Instituto Universitario de Matem\'atica Pura y Aplicada, Universitat Polit\`ecnica de Val\`encia, Camino de Vera s/n, 46022 Valencia, Spain,\newline    Florida Universitaria, C/ Rei en Jaume I, nº 2, 46470 Catarroja (Valencia), Spain}
	
	\email{afrubon@posgrado.upv.es}
	\author[J. Rodr\'{\i}guez-L\'opez]{Jes\'us Rodr\'{\i}guez-L\'opez}
	\address[J. Rodr\'{\i}guez-L\'opez]{Instituto Universitario de Matem\'atica Pura y Aplicada, Universitat Polit\`ecnica de Val\`encia, Camino de Vera s/n, 46022 Valencia, Spain}
	\thanks{Last author's research
		is part of the project PID2022-139248NB-I00 funded by MICIU/AEI/10.13039/501100011033 and ERDF/EU}
	\email{jrlopez@mat.upv.es}

	\subjclass[2020]{54E35; 54B10; 54C05.}
	
	\keywords{quasi-pseudometric aggregation function; product topology; supremum topology.}

	\begin{abstract}
		Metric-preserving functions (here, metric aggregation functions) offer a natural method for constructing metrics on Cartesian products of metric spaces or for aggregating multiple metrics defined on a common set. Strongly metric-preserving functions represent a more specialized subset of these functions, ensuring that the new metric aligns with the product topology, in the Cartesian product case.  However, these strong functions have not been previously explored for quasi-pseudometrics. Furthermore, in the case where all metrics are defined on the same set, the problem has not been addressed previously.
		
		In this paper, we investigate the class of strongly (quasi-)(pseudo)metric aggregation functions, extending the classical concept.  We begin by examining the case where the aggregation function produces (quasi-)(pseudo)metrics on Cartesian products, characterizing these functions through continuity at zero and a minimal zero preimage condition.
		
		In addition, we will examine the scenario where the aggregation function produces a (quasi-)(pseudo)metric defined on a fixed set. Within this context, we will demonstrate that the appropriate topology to consider is the supremum topology. We will also provide both necessary and sufficient conditions for an (quasi-)(pseudo)metric aggregation function on sets to qualify as a strongly one, thereby addressing a gap in the existing literature.
		
	\end{abstract}
	
	\date{\today}
	\maketitle

	%
	\section{Introduction}
	%
	
	When a new mathematical space is introduced, one fundamental question that arises is how it interacts with classical set-theoretic operations, such as the Cartesian product.  For instance, given a family of metric spaces $\big\{(X_i,d_i):i\in I\big\},$ one may ask how to endow the product $\prod_{i\in I} X_i$ with a metric. 
	This question becomes trivial if we ignore the metrics $d_i$, as one could always use the discrete metric. However, it’s essential to move beyond this naive approach and explore methods for defining a metric on $\prod_{i \in I} X_i$ that genuinely considers the metrics of the individual factor spaces.

	A general approach to this problem was developed by Bors\'ik and Dobo\v{s} \cite{BorsikDobos81a,Dobos98}. They analyzed functions $F:[0,+\infty)^I\to [0,+\infty)$ such that, for any family of metric spaces $\big\{(X_i,d_i):i\in I\big\},$  the composition $F\circ d_\Pi$ defines a metric on $\prod_{i\in I} X_i.$ Here, the mapping $d_\Pi:\prod_{i\in I} X_i\times \prod_{i\in I} X_i\to [0,+\infty)^I$ is defined by
	$$d_\Pi\big((x_i)_{i\in I},(y_i)_{i\in I}\big)=\big(d_i(x_i,y_i)\big)_{i\in I}$$
	for all $(x_i)_{i\in I},(y_i)_{i\in I}\in\prod_{i\in I} X_i.$ 
	
	These functions were originally called \emph{metric preserving functions} and where characterized in \cite{BorsikDobos81b,Dobos98}. Specifically, it was shown that  $F$ is metric preserving if and only if whenever $(a,b,c)\in ((0,+\infty)^I)^3$ satisfies $a_i\leq b_i+c_i, $ $b_i\leq a_i+c_i, $ $c_i\leq a_i+b_i, $ for every $i\in I$ ($(a,b,c)$ is a \emph{triangle triplet}), then $\big(F(a),F(b),F(c)\big)$ is also a triangle triplet. Since these functions allow for a metric to be obtained on $\prod_{i \in I} X_i$ by, in some sense, aggregating the metrics of the family $\{d_i : i \in I\},$ contemporary literature often incorporates the adjective ``aggregation'' to refer to these functions.

	The origins of this problem can be traced back to a classical paper by Wilson \cite{Wilson35}, where certain types of continuous transformations between metric spaces were explored. One of these transformations was defined using a so-called scale function, which is essentially a metric preserving function of one variable.
	
	Later, a more detailed study was conducted by Sreenivasan \cite{Sree47}, again focusing on one-variable functions $f:[0,+\infty)\to [0,+\infty)$ (for a survey of these functions, see \cite{Corazza99}). We emphasize that aggregation functions have numerous applications, particularly in decision-making problems (see, for instance, \cite{Pap15,BookTorraNaru, LHSR22,Demir23}).  Moreover, metric preserving functions have been employed to develop flexible models for index spaces \cite{ArnauCalabuigGonzalezSP24}.
	
	Sreenivasan raised the following more interesting question: given a metric preserving function $f:[0,+\infty)\to [0,+\infty)$ and a metric space $(X,d),$ is the metric $d$ topologically equivalent to $f\circ d$? In general, the answer is negative, and functions that satisfy this property are referred to as \emph{strongly metric preserving} as described in \cite{Corazza99,Dobos98}. These functions where characterized in \cite{BorsikDobos81a} by means of continuity of $F$ at $0.$
	
	Additionally, the same authors characterized strongly metric preserving functions of several variables in \cite{BorsikDobos81b}. To clarify this notion, let \( F: [0, +\infty)^I \to [0, +\infty) \) be a metric preserving function. \( F \) is considered \emph{strongly metric preserving} if, for any collection of metric spaces $\big\{(X_i,d_i):i\in I\big\},$ the product topology $\prod_{i\in I}\mathscr{T}(d_i)$ is equal to the topology $\mathscr{T}(F\circ d_\Pi)$ generated by the metric $F\circ d_\Pi$. This property is equivalent to the continuity of $F$ at $(0)_{i\in I}.$
	
	The notion of metric preserving functions has also been extended to the asymmetric context by Mayor and Valero \cite{MayorVale10} by considering families of quasi-metric spaces instead of traditional metric spaces. Characterizations for pseudometric or quasi-pseudometric preserving functions can be found in \cite{PraTri02} and \cite{FBRL25a}, respectively. However, to the best of our knowledge, the characterization of strongly quasi-pseudometric preserving functions has not yet been explored in the literature. Therefore, the primary objective of this paper is to address this gap by providing the necessary characterizations.
	
	Furthermore, we investigate another related problem that appears to have been overlooked. Suppose that we are giving a family of metrics $\{d_i:i\in I\}$ all defined on the same underlying set $X.$ Can a new metric on $X$ be defined through a function $F: [0, +\infty)^I \to [0, +\infty)$? In this context, we can define $d_\Delta: X\times X\to [0,+\infty)^I$ as
	$$d_\Delta(x,y)=\big(d_i(x,y)\big)_{i\in I}$$
	for every $x,y\in X.$ The question then arises as to whether $F\circ d_\Delta$ defines a metric on $X.$ The characterization of functions that satisfy this property has been established by Mayor and Valero \cite{MayorVale19}, although this issue had previously been examined for pseudometrics by Praderas and Trillas \cite{PraTri02}. Mi\~nana and Valero also tackled the problem for quasi-metrics \cite{MinyaVale19}. None of these studies, however, addressed the corresponding ``strong'' version of the problem, which takes into account the induced topology. In this paper, we also explore this issue by clarifying what the strongly preserving property should mean in this context.

	We conclude this introduction with a brief summary of the paper.

	In Section \ref{sec2}, we explicitly introduce the definition of a (quasi-)(pseudo)metric aggregation function and distinguish between two different aggregation frameworks. The first one, which we refer to as aggregation \emph{on products}, follows the classical approach introduced Bors\'ik and Dobo\v{s}, where the problem of constructing a metric on the Cartesian product of metric spaces is considered (in their terminology, these functions are called metric-preserving functions). The second framework, which we call aggregation \emph{on sets}, is based on the approach developed by Pradera and Trillas \cite{PraTri02} and Mayor, Mi\~nana and Valero \cite{MayorVale19,MinyaVale19}. This latter approach studies functions that merge a family of metrics defined on a common underlying set into a single metric. We also recall several known results concerning the characterization of these aggregation functions.
	
	Section \ref{sec3} addresses the problem of characterizing strongly quasi-pseudometric aggregation functions on products. Our main result is Theorem \ref{thm:ChactStronglyProd}, which provides a complete characterization of strongly (quasi-)(pseudo)metric aggregation functions on products. More precisely, we show that the class of strongly quasi-metric aggregation functions on products coincides with that of strongly quasi-pseudometric aggregation functions on products. Furthermore, this common class forms a subclass of the strongly metric aggregation functions, which coincides with the class of strongly pseudometric aggregation functions. Our characterization is also expressed in terms of well-known properties.
	
	The final section, Section \ref{sec4}, is dedicated to exploring the preservation of topology in the context of (quasi-)(pseudo)metric aggregation functions on sets. In this framework, the supremum topology serves as the equivalent of the product topology. A key result from this section is Theorem \ref{thm:ChactStronglySets}, where we demonstrate that strongly aggregation on products coincides with strongly aggregation on sets for quasi-pseudometrics, quasi-metrics, and pseudometrics. Furthermore, we offer a characterization of strongly metric aggregation functions on sets, as detailed in Theorem \ref{thm:smafs}. However, we remain uncertain whether these functions are equivalent to those strongly aggregating metrics on products.

	%
	\section{Quasi-pseudometric aggregation functions}\label{sec2}
	%
	
	We begin by recalling that a quasi-metric \(d\) on a nonempty set \(X\) satisfies all the classic axioms of a metric, except for the symmetry axiom. Additionally, the separation axiom is modified to state that 
	
	\[
	d(x,y) = d(y,x) = 0 \text{ if and only if } x = y
	\]
	for all \(x, y \in X\). If this weaker separation axiom is removed, we then obtain a quasi-pseudometric.
	
	Next, we will provide a precise definition of the central subject of this paper: quasi-pseudometric aggregation functions.
	
	\begin{definition}[\mbox{see \cite[Definition 1]{MassaVale13}}]\label{def:eqpmaf}
		A function $F: [0,+\infty)^I\rightarrow [0,+\infty)$ is said to be 
		\begin{itemize}
			\item a \textbf{quasi-pseudometric aggregation function on products} if whenever $\big\{(X_i,d_i):i\in I\big\}$ is a family of quasi-pseudometric spaces, then $F\circ d_\Pi$ is a quasi-pseudometric on $\prod_{i\in I} X_i$ where $d_\Pi:\left(\prod_{i\in I} X_i\right)\times \left(\prod_{i\in I} X_i\right)\to [0,+\infty]^I$ is given by
			$$d_\Pi(x,y)=\big(d_i(x_i,y_i)\big)_{i\in I}$$
			for every $x,y\in \prod_{i\in I} X_i;$
			\item a \textbf{quasi-pseudometric aggregation function on sets} if whenever $\{d_{i}:i \in I\}$ is a family of quasi-pseudometrics on a nonempty set $X$, then $F \circ d_\Delta$ is a quasi-pseudometric on $X$, where $d_\Delta: X \times X \rightarrow [0,+\infty]^{I}$ is given by $$d_\Delta(x,y)=\big(d_{i}(x,y)\big)_{i \in I}$$
			for every $x,y \in X.$
		\end{itemize}
		
		Similar definitions can be stated for quasi-metrics, pseudometrics and metrics.
	\end{definition}
	
	\begin{remark}\label{rem:qpmaf}
		As we have previously noted, quasi-pseudometric aggregation functions and their variants have been examined extensively in existing literature:
		\begin{itemize}
			\item metric aggregation functions on products were mainly examined by Dobo\v{s} \cite{Dobos98} under the name metric preserving functions (see also \cite{Corazza99}); 
			\item pseudometric aggregation functions on products and on sets were characterized by Pradera and Trillas \cite{PraTri02}; 
			\item quasi-metric aggregation functions on products were studied by Mayor and Valero  \cite{MayorVale10} under the name asymmetric distance aggregation function;
			\item metric aggregation functions on sets were analyzed by Mayor and Valero  \cite{MayorVale19} under the name metric preserving functions; 
			\item quasi-metric aggregation functions on sets were considered by Mi\~nana and Valero  \cite{MinyaVale19} under the name $n$-quasi-metric aggregation function.
			\item quasi-pseudometric aggregation functions on products and on sets were characterized in \cite{FBRL25a}.
		\end{itemize}

	\end{remark}

	We next collect some characterizations about the aggregation of quasi-pseudometrics from the previous references. But first, we introduce some terminology. We denote by $0_I$ the element of $[0,+\infty)^I$ where all coordinates are zero. By an abuse of notation, the symbol \( \leq \) will also represent the componentwise order on \( [0, +\infty)^I \), which is induced by the usual order \( \leq \) on \( [0, +\infty) \). 
	
	Given $a,b,c\in [0,+\infty)^I,$ we say that $(a,b,c)$ is a triangle triplet if $a\leq b+c, $ $b\leq a+c,$ and $c\leq a+b$ \cite{Dobos98}.
	
	Additionally, recall that a function \( F: [0, +\infty)^I \rightarrow [0, +\infty) \) is defined to be subadditive if it satisfies the condition \( F(a+b) \leq F(a) + F(b) \) for all \( a, b \in [0, +\infty)^I \). Here, the operation on the left-hand side of the inequality refers to the sum defined componentwise.
	
	\begin{theorem}[\mbox{\cite[Chapter 9, Theorem 2]{Dobos98}}]\label{thm:Dobos}
		Let $F: [0,+\infty)^I\rightarrow [0,+\infty)$. The follo\-wing statements are equivalent:
		\begin{enumerate}[(1)]
			\item $F$ is a metric aggregation function on products;
			\item $F^{-1}(0)=\{0_I\}$ and $\big(F(a),F(b),F(c)\big)$ is a triangle triplet whenever $(a,b,c)$ is a triangle triplet.
		\end{enumerate}
	\end{theorem}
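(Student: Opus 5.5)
We prove the two implications separately. In both directions the key device is to build small auxiliary metric spaces whose distance vectors realize a prescribed triangle triplet.

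\emph{(1)$\Rightarrow$(2).} First we show $F^{-1}(0)=\{0_I\}$. Take any family of metric spaces, for instance $X_i=\{0,1\}$ with the discrete metric. Then $F(0_I)=F\circ d_\Pi(x,x)=0$. Now let $a\in[0,+\infty)^I$ with $a\neq 0_I$. Put $X_i=\{0,1\}$ with $d_i(0,1)=a_i$ whenever $a_i>0$, and $X_i=\{0\}$ otherwise. The points $x=(0)_{i\in I}$ and $y$, with $y_i=1$ exactly where $a_i>0$, are distinct and satisfy $d_\Pi(x,y)=a$. Separation of $F\circ d_\Pi$ then forces $F(a)>0$.

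Next let $(a,b,c)$ be a triangle triplet. For each $i$, take $X_i=\{p,q,r\}$ with
\[
d_i(p,q)=a_i,\quad d_i(q,r)=b_i,\quad d_i(p,r)=c_i.
\]
The triangle inequalities on each coordinate are exactly the triplet conditions, so $d_i$ is a pseudometric. To get genuine metrics, identify points at distance $0$; this stays consistent, since $a_i=0$ forces $b_i=c_i$. Set $x=(p)_i$, $y=(q)_i$, $z=(r)_i$ in the product. Then $d_\Pi(x,y)=a$, $d_\Pi(y,z)=b$ and $d_\Pi(x,z)=c$. The triangle inequality for $F\circ d_\Pi$ applied to all three orderings gives that $(F(a),F(b),F(c))$ is a triangle triplet.

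\emph{(2)$\Rightarrow$(1).} Let $\{(X_i,d_i)\}$ be metric spaces and $x,y,z\in\prod X_i$.
\begin{itemize}
\item Separation: $F(d_\Pi(x,y))=0$ iff $d_\Pi(x,y)=0_I$ iff $x=y$.
\item Symmetry: this follows from the symmetry of each $d_i$.
\item Triangle inequality: put $a=d_\Pi(x,y)$, $b=d_\Pi(y,z)$, $c=d_\Pi(x,z)$. Coordinatewise this is a triangle triplet, and condition (2) yields $F(c)\le F(a)+F(b)$.
\end{itemize}

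The main obstacle is that (2) is stated for triplets in $[0,+\infty)^I$, which allows zero coordinates, and the realization step must handle these. The collapsed-point construction above deals with this. If the paper instead intends triplets with positive coordinates only, an extra argument is needed for degenerate cases. In that case I would note that a triplet with $a=0_I$ forces $b=c$, so the conclusion is trivial from $F(0_I)=0$. The remaining mixed-zero cases are handled coordinatewise as above, using singleton or two-point factors.
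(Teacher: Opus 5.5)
The paper does not prove this theorem; it quotes it from Dobo\v{s}'s book, so there is no in-paper argument to compare against. Your proof is correct and complete for the statement as written, and it is the standard one: for (1)$\Rightarrow$(2) you realize $a\neq 0_I$, and each coordinate of a triangle triplet, by points of small metric spaces; for (2)$\Rightarrow$(1) you check the metric axioms of $F\circ d_\Pi$ directly.

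Your closing paragraph, however, should be deleted, because its claim is false.

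\begin{itemize}
\item It is unnecessary. The paper's formal definition of a triangle triplet in Section 2 takes $a,b,c\in[0,+\infty)^I$, so there are no degenerate cases left to treat.
\item Under a positive-coordinates-only reading, (2)$\Rightarrow$(1) actually fails for $|I|\ge 2$. In that direction you cannot choose auxiliary factors; you must deduce $F(c)\le F(a)+F(b)$ from the hypothesis alone, and the mixed-zero triplets are simply not covered.
\end{itemize}

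Here is a counterexample. Take $I=\{1,2\}$ and define $F(0_I)=0$, $F\equiv 1$ on $(0,+\infty)^2$, and $F\equiv 3$ at points with exactly one zero coordinate. Then $F^{-1}(0)=\{0_I\}$, and every triplet in $((0,+\infty)^2)^3$ is mapped to the triangle triplet $(1,1,1)$. Now equip $\{p,q,r\}\times\{p,q\}$ with discrete metrics on both factors and take the points $x=(p,p)$, $y=(q,p)$, $z=(r,q)$. These give
$F\circ d_\Pi(x,y)=F(1,0)=3>2=F\circ d_\Pi(x,z)+F\circ d_\Pi(z,y)$, so $F\circ d_\Pi$ is not a metric.
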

	
	\begin{theorem}[\mbox{\cite{PraTri02}}]\label{thm:PraTri}
		Let $F: [0,+\infty)^I\rightarrow [0,+\infty)$. The follo\-wing statements are equivalent:
		\begin{enumerate}[(1)]
			\item $F$ is a pseudometric aggregation function on products;
			\item $F$ is a pseudometric aggregation function on sets;
			\item $F(0_I)=0$ and $\big(F(a),F(b),F(c)\big)$ is a triangle triplet whenever $(a,b,c)$ is a triangle triplet.
		\end{enumerate}
	\end{theorem}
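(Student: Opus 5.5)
We prove the cycle $(1)\Rightarrow(3)\Rightarrow(2)\Rightarrow(1)$. Throughout, for $a\in[0,+\infty)^I$ we write $a_i$ for its coordinates.

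\textbf{Step $(1)\Rightarrow(3)$.} Take the one-point spaces $X_i=\{p\}$. Then $d_\Pi$ of the only pair of points is $0_I$. Since $F\circ d_\Pi$ is a pseudometric, it follows that $F(0_I)=0$.

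For the triangle triplet condition, fix a triangle triplet $(a,b,c)$. For each $i\in I$, the numbers $a_i,b_i,c_i$ satisfy the three triangle inequalities. Hence they are realized as mutual distances of three points in a pseudometric space: take $X_i=\{x_i,y_i,z_i\}$ with
\[
d_i(x_i,y_i)=a_i,\quad d_i(y_i,z_i)=b_i,\quad d_i(x_i,z_i)=c_i,
\]
extended symmetrically and with zero diagonal. Points may coincide in distance zero, which is allowed for pseudometrics; the triangle inequalities among three points are exactly the triangle-triplet conditions. Put $x=(x_i)$, $y=(y_i)$, $z=(z_i)$. Then $F\circ d_\Pi$ evaluated at $(x,y)$, $(y,z)$, $(x,z)$ gives $F(a)$, $F(b)$, $F(c)$. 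Because $F\circ d_\Pi$ is a pseudometric, the three triangle inequalities for these points say precisely that $(F(a),F(b),F(c))$ is a triangle triplet.

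\textbf{Step $(3)\Rightarrow(2)$.} Let $\{d_i:i\in I\}$ be pseudometrics on $X$ and set $D=F\circ d_\Delta$.
\begin{itemize}
\item Since $d_\Delta(x,x)=0_I$, we get $D(x,x)=F(0_I)=0$.
\item Symmetry of $D$ is inherited from the symmetry of each $d_i$.
\item For $x,y,z\in X$, the triple $\big(d_\Delta(x,y),d_\Delta(y,z),d_\Delta(x,z)\big)$ is a triangle triplet, coordinatewise by the triangle inequality of each $d_i$. Applying (3) gives the triangle inequality for $D$.
\end{itemize}

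\textbf{Step $(2)\Rightarrow(1)$.} This is the step that needs an actual idea: we must pass from pseudometrics on a common set to pseudometrics on different factors. Given pseudometric spaces $(X_i,d_i)$, let $X=\prod_{j\in I}X_j$ and define $\rho_i(x,y)=d_i(x_i,y_i)$ on $X$. Each $\rho_i$ is a pseudometric on $X$; it is only pseudo, which is exactly why the statement is for pseudometrics. Moreover $(\rho_i)_\Delta=d_\Pi$ on $X$. Hence, by (2), $F\circ d_\Pi$ is a pseudometric on $X$.

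The only point to check carefully is the realization of a triangle triplet by three points in Step $(1)\Rightarrow(3)$, i.e.\ that the proposed $d_i$ is a pseudometric. This holds because the defining conditions of a triangle triplet are exactly the triangle inequalities for three points.
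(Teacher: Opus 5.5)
Your proof is correct and complete. The paper does not prove this statement itself; it is quoted from \cite{PraTri02} and used as a black box, so there is no in-paper argument to compare against.

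Your cycle $(1)\Rightarrow(3)\Rightarrow(2)\Rightarrow(1)$ is the natural one. Realizing a triangle triplet coordinatewise by three-point spaces is the standard device in the theory of metric preserving functions (cf.\ \cite{Dobos98}). In the pseudometric setting it needs no identification of points, exactly as you note.

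Your step $(2)\Rightarrow(1)$ pulls the $d_i$ back along the projections to the pseudometrics $\rho_i(x,y)=d_i(x_i,y_i)$ on $\prod_{j\in I}X_j$. This is the same trick the paper uses in Lemma~\ref{lemma:ImQPM}.

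The opposite passage $(1)\Rightarrow(2)$ can also be done directly, as in Proposition~\ref{prop:products_implies_sets}. There $F\circ d_\Delta$ is the pullback of the pseudometric $F\circ d_\Pi$ on $X^I$ along the diagonal map $\mathcal{I}$. So $(1)\Leftrightarrow(2)$ holds for purely structural reasons, and (3) is only needed to identify the common class.

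One technicality is unstated. Applying (2) to $\prod_{j\in I}X_j$ requires this set to be nonempty, which holds (by choice) when every $X_j$ is nonempty. If some $X_j$ is empty, the claim is vacuous.
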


	\begin{theorem}[\mbox{\cite[Theorems 6 and 9]{MayorVale10}}]\label{thm:MayorValero}
		Let $F: [0,+\infty)^I\rightarrow [0,+\infty)$. The follo\-wing statements are equivalent:
		\begin{enumerate}[(1)]
			\item $F$ is a quasi-metric aggregation function on products;
			\item $F^{-1}(0)=\{0_I\}$ and $F(a)\leq F(b)+F(c),$ whenever $a\leq b+c,$ $a,b,c\in [0,+\infty)^I;$
			\item $F^{-1}(0)=\{0_I\},$ $F$ is subadditive and monotone.
		\end{enumerate}
	\end{theorem}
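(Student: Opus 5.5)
We prove the cycle (1)$\Rightarrow$(2)$\Rightarrow$(3)$\Rightarrow$(1).

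\emph{(1)$\Rightarrow$(2).} Assume $F$ is a quasi-metric aggregation function on products. For $F^{-1}(0)=\{0_I\}$, apply the definition to the family in which each $X_i=[0,+\infty)$ carries the asymmetric quasi-metric $u(x,y)=\max\{y-x,0\}$. Then $d_\Pi(0_I,a)=a$ and $d_\Pi(a,0_I)=0_I$. If $F(0_I)\neq 0$, then $F\circ d_\Pi(x,x)\neq 0$, which is impossible. If $F(a)=0$ with $a\neq 0_I$, then both $F\circ d_\Pi(0_I,a)$ and $F\circ d_\Pi(a,0_I)$ vanish although $a\neq 0_I$, contradicting separation.

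For the triangle-type inequality, take $a,b,c$ with $a\le b+c$ and again use $X_i=[0,+\infty)$ with $u$. Set $x=0_I$, $z=b$ and $y=b+c$, so that $d_\Pi(x,z)=b$ and $d_\Pi(z,y)=c$. This gives $d_\Pi(x,y)=b+c$, and hence $F(b+c)\le F(b)+F(c)$. To pass from $b+c$ down to an arbitrary $a\le b+c$, we need the points to realize $a$ exactly. So we replace the ambient space by the three-point sets $X_i=\{p,q,r\}$ with
\[
d_i(p,r)=a_i,\quad d_i(p,q)=b_i,\quad d_i(q,r)=c_i,
\]
and all remaining off-diagonal distances equal to $M_i=a_i+b_i+c_i+1$ (or $0$ when this is harmless). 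We then check that each $d_i$ is a quasi-metric. The triangle inequalities involving an $M_i$ term hold because $M_i$ dominates every sum of the other values; only $a_i\le b_i+c_i$ needs to be used. Separation also holds, since each pair of distinct points has at least one positive direction. Applying the triangle inequality of $F\circ d_\Pi$ at the constant points $p,q,r$ yields $F(a)\le F(b)+F(c)$.

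The delicate point is coordinates where $a_i=b_i=c_i=0$: the reverse distances $M_i>0$ guarantee separation there.

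\emph{(2)$\Rightarrow$(3).} Taking $c=0_I$ gives monotonicity, since $a\le b$ implies $F(a)\le F(b)+F(0_I)=F(b)$; here $F(0_I)=0$ by the zero-set assumption. Taking $a=b+c$ gives subadditivity.

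\emph{(3)$\Rightarrow$(1).} Given quasi-metric spaces $(X_i,d_i)$, set $\rho=F\circ d_\Pi$. Then $\rho(x,x)=F(0_I)=0$. The triangle inequality follows from
\[
d_\Pi(x,y)\le d_\Pi(x,z)+d_\Pi(z,y)
\]
componentwise, combined with monotonicity and then subadditivity. For separation, $\rho(x,y)=\rho(y,x)=0$ forces $d_\Pi(x,y)=d_\Pi(y,x)=0_I$ by the zero-set condition, so $x_i=y_i$ for every $i$.

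The main obstacle is the three-point construction in (1)$\Rightarrow$(2). One must verify that each $d_i$ is a genuine quasi-metric in every degenerate case, where some of $a_i,b_i,c_i$ vanish. Choosing all reverse distances equal to the large constant $M_i$ should settle this uniformly.
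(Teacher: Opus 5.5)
Your proof is correct and complete. The paper gives no proof of this statement; it is imported from Mayor and Valero \cite[Theorems 6 and 9]{MayorVale10}. So there is no internal argument to compare against, and your cycle (1)$\Rightarrow$(2)$\Rightarrow$(3)$\Rightarrow$(1) stands as a self-contained proof.

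Two small remarks on (1)$\Rightarrow$(2).

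First, the preliminary computation with $u$ that yields $F(b+c)\le F(b)+F(c)$ is redundant. The three-point construction already covers the case $a=b+c$. (You do still need $u$ for the zero-set part, where it works as written.)

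Second, drop the hedge ``(or $0$ when this is harmless)'' and commit to $M_i$ for all three reverse distances $d_i(q,p)$, $d_i(r,p)$, $d_i(r,q)$. A reverse distance equal to $0$ breaks separation in exactly the degenerate coordinates you worry about: for instance $b_i=0$ together with $d_i(q,p)=0$. With the uniform choice, check the six ordered triples of distinct points:
\begin{itemize}
\item only $d_i(p,r)\le d_i(p,q)+d_i(q,r)$ uses $a_i\le b_i+c_i$;
\item the other five hold because either $M_i$ exceeds each of $a_i,b_i,c_i$, or $M_i$ appears on both sides;
\item separation follows from $M_i>0$.
\end{itemize}
So each $d_i$ is a genuine quasi-metric, and evaluating $F\circ d_\Pi$ at the constant points $(p)_{i\in I},(q)_{i\in I},(r)_{i\in I}$ gives $F(a)\le F(b)+F(c)$ as claimed.
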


	\begin{theorem}[\mbox{\cite[Theorem 3.16]{FBRL25a}}]\label{thm:FBRL}
		Let $F: [0,+\infty)^I\rightarrow [0,+\infty)$. The follo\-wing statements are equivalent:
		\begin{enumerate}[(1)]
			\item $F$ is a quasi-pseudometric aggregation function on products;
			\item $F$ is a quasi-pseudometric aggregation function on sets;
			\item $F\left(0_I\right)=0$, $F$ is subadditive and monotone;
			\item $F\left(0_I\right)=0$ and $F(a)\leq F(b)+F(c),$ whenever $a\leq b+c,$ $a,b,c\in [0,+\infty)^I.$
		\end{enumerate}
	\end{theorem}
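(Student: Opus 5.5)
We plan to prove the cycle of implications $(1)\Rightarrow(2)\Rightarrow(4)\Rightarrow(3)\Rightarrow(1)$.

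\emph{Step $(1)\Rightarrow(2)$.} Given a family $\{d_i:i\in I\}$ of quasi-pseudometrics on $X$, we set $X_i=X$ for every $i$. Consider the diagonal embedding $j:X\to\prod_{i\in I}X_i$, $j(x)=(x)_{i\in I}$. Then $F\circ d_\Delta=(F\circ d_\Pi)\circ(j\times j)$. The restriction of a quasi-pseudometric to a subset is again a quasi-pseudometric, so $F\circ d_\Delta$ is a quasi-pseudometric on $X$.

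\emph{Step $(2)\Rightarrow(4)$.} Applying $(2)$ to the zero quasi-pseudometrics gives $F(0_I)=0$. For the inequality, fix $a,b,c\in[0,+\infty)^I$ with $a\leq b+c$. We take a three-point set $X=\{x,y,z\}$ and try to define, for each $i$, a quasi-pseudometric $d_i$ with
\[
d_i(x,z)=a_i,\qquad d_i(x,y)=b_i,\qquad d_i(y,z)=c_i,
\]
and choose the reverse distances so that every triangle inequality holds. The simplest choice is to put the reverse distances very large, namely $M_i=a_i+b_i+c_i$ for each of $d_i(z,x),d_i(y,x),d_i(z,y)$. We must then check all six ordered triangle inequalities on three points. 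The forward inequality $a_i\leq b_i+c_i$ is the hypothesis. Every other inequality either has some $M_i$ on its right-hand side, which dominates any single forward distance, or has $M_i$ on both sides in a way that still closes, since $M_i\leq M_i+(\text{anything})$. The triangle inequality for $F\circ d_\Delta$ at $(x,y,z)$ then yields $F(a)\leq F(b)+F(c)$. I expect this verification of the asymmetric three-point space to be the main (if routine) obstacle.

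\emph{Step $(4)\Rightarrow(3)$.} Taking $b=a+c$ and $c=0_I$ is not quite what we want, so we argue directly:
\begin{itemize}
\item For monotonicity, if $a\leq b$ then $a\leq b+0_I$, hence $F(a)\leq F(b)+F(0_I)=F(b)$.
\item For subadditivity, apply $(4)$ with $a+b\leq a+b$, which gives $F(a+b)\leq F(a)+F(b)$.
\end{itemize}

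\emph{Step $(3)\Rightarrow(1)$.} Fix quasi-pseudometric spaces $(X_i,d_i)$ and $x,y,z\in\prod_{i\in I}X_i$.
\begin{itemize}
\item $F(d_\Pi(x,x))=F(0_I)=0$.
\item Componentwise, $d_\Pi(x,z)\leq d_\Pi(x,y)+d_\Pi(y,z)$. Monotonicity and then subadditivity give
\[
F(d_\Pi(x,z))\leq F\big(d_\Pi(x,y)+d_\Pi(y,z)\big)\leq F(d_\Pi(x,y))+F(d_\Pi(y,z)).
\]
\end{itemize}
Hence $F\circ d_\Pi$ is a quasi-pseudometric, which closes the cycle.
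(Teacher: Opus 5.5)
Your proof is correct and complete. The cycle $(1)\Rightarrow(2)\Rightarrow(4)\Rightarrow(3)\Rightarrow(1)$ closes, and your three-point spaces really are quasi-pseudometric spaces. Of the six ordered triangle inequalities, the only one not involving a reverse distance is $a_i\le b_i+c_i$, which is your hypothesis. The other five follow from $M_i\ge\max\{a_i,b_i,c_i\}$.

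The paper itself gives no proof of this statement. It is quoted from \cite{FBRL25a}, where aggregation functions are viewed as lax morphisms of quantales; condition (3) is exactly the lax-morphism condition for the additive quantales involved. So there is no in-paper argument to compare yours against.

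Your elementary route has the merit of showing exactly where asymmetry enters. A quasi-pseudometric imposes no link between $d_i(p,q)$ and $d_i(q,p)$. You can therefore inflate all reverse distances and realize a single inequality $a\le b+c$ on a three-point set. In the symmetric setting of Theorem \ref{thm:PraTri}, by contrast, all three inequalities of a triangle triplet are forced at once. That is why the symmetric case yields only preservation of triangle triplets, rather than monotonicity plus subadditivity.

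Two cosmetic remarks. First, the sentence about taking $b=a+c$ and $c=0_I$ can simply be deleted. Second, in $(1)\Rightarrow(2)$ it is cleaner to say that the pullback of a quasi-pseudometric along any map (here $j$) is again a quasi-pseudometric, so injectivity of $j$ plays no role.
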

	%
	\section{Strongly quasi-pseudometric aggregation functions on products}\label{sec3}
	%
	
	In \cite{BorsikDobos81b,Dobos98}, the authors posed and solved the following question concerning the aggregation of metrics on products. 
	Let $F : [0,+\infty)^I \longrightarrow [0,+\infty)$
	be a metric aggregation function on products. If $\big\{(X_i,d_i) : i \in I\big\}$ is a family of metric spaces, then $F \circ d_\Pi$ defines a metric on the Cartesian product $\prod_{i \in I} X_i$. The question is: under which conditions is the product topology 
	\[
	\prod_{i \in I} \mathscr{T}(d_i)
	\]
	metrizable by $F \circ d_\Pi$, that is, when do we have
	\[
	\prod_{i \in I} \mathscr{T}(d_i) \;=\; \mathscr{T}(F \circ d_\Pi) \, ?
	\]

	When $|I|=1$, the problem was first investigated by Sreenivasan \cite{Sree47}, and later Bors\'ik and Dobo\v{s} \cite{BorsikDobos81a} completely characterized the functions 
	$F : [0,+\infty) \longrightarrow [0,+\infty)$
	for which every metric $d$ is topologically equivalent to $F \circ d$, showing that these are precisely the continuous ones (see also \cite{Corazza99}). Such functions were called \emph{strongly metric-preserving}, and we retain the adjective ``strongly'' here. For an arbitrary index set $I$, the general solution was given in \cite{BorsikDobos81b} (see also \cite{Dobos98}).
	
	We next study the same question for quasi-pseudometric aggregation functions on products. 
	
	\begin{definition}
		A (quasi-)(pseudo)metric aggregation function on products $F:[0,+\infty)^I$$\to [0,+\infty)$ is said to be a \textbf{strongly (quasi-)(pseudo)metric aggregation function on products} if for every family of (quasi-)(pseudo)metric spaces $\big\{(X_i,d_i):i\in I\big\}$ then
		$$\prod_{i \in I} \mathscr{T}(d_i) \;=\; \mathscr{T}(F \circ d_\Pi) .$$
	\end{definition}

	Now, we shall illustrate the above definition with some examples.
	\begin{example}
		Let $I$ be a finite set and let us consider $M\colon [0,+\infty)^I \to [0,+\infty),$ given by
		\[
		M\big(a\big)=\max_{i\in I} a_i.
		\]
		for every $a=(a_i)_{i\in I}\in [0,+\infty)^I. $
		Let us show that $M$ is a strongly \mbox{(quasi-)}\mbox{(pseudo
			)}metric aggregation function on products. An easy verification proves that $M^{-1}(0)=\{0_I\}$, $M$ is subadditive and monotone. Then by Theorems \ref{thm:Dobos}, \ref{thm:PraTri}, \ref{thm:MayorValero}, \ref{thm:FBRL}, it is a (quasi-)(pseudo)metric aggregation function on products. 
		
		Next, we demonstrate that $M$ is also strongly.\\
		
		Let $\big\{(X_i,d_i):i\in I\big\}$ be a family of (quasi-)(pseudo)metric spaces, $x\in \prod_{i\in I} X_i$ and $\varepsilon>0$. Then 
		\begin{align*}
			B_{M\circ d_\Pi}(x,\varepsilon)
			&=\left\{y\in\prod_{i\in I} X_i : M\circ d_\Pi(x,y)<\varepsilon\right\}=
			\left\{y\in\prod_{i\in I} X_i : \max_{i\in I} d_i(x_i,y_i)<\varepsilon\right\}\\
			&=\prod_{i\in I} B_{d_i}(x_i,\varepsilon)
		\end{align*}
		which is an open set in the product topology.
		Hence $\mathscr T(M\circ d_\Pi)\subseteq \prod_{i\in I} \mathscr T(d_i)$.
		
		To check the other inclusion, let $\prod_{i\in I} B_{d_i}(x_i,\varepsilon_i)$ be a basic open set in the product topology $\prod_{i\in I} \mathscr T(d_i)$, where $x_i\in X_i$, $\varepsilon_i>0,$ for all $i\in I.$ 
		
		Then
		\[
		B_{M\circ d_\Pi}(x,\varepsilon)
		=
		\prod_{i\in I} B_{d_i}(x_i,\varepsilon)
		\subseteq
		\prod_{i\in I}  B_{d_i}(x_i,\varepsilon_i)
		\]
		where $\varepsilon:=\min_{i\in I}\varepsilon_i>0.$
		Therefore, every basic open set in the product topology is open in
		$\mathscr T(M\circ d_\Pi)$, and hence
		\[
		\prod_{i\in I} \mathscr T(d_i)\subseteq \mathscr T(M\circ d).
		\]
		Thus, $M$ is a strongly (quasi-)(pseudo)metric aggregation function.

		Notice that the classical supremum metric $d_\infty$ on $\mathbb{R}^n$ is nothing else but $M\circ d_\Pi$ for the family of metric spaces $\big\{(\mathbb{R},d_i):i\in\{1,\ldots,n\}\big\}$, where $d_i$ is the Euclidean metric for every $i\in\{1,\ldots,n\}.$
		
	\end{example}

	\begin{example}
		Let $F:[0,+\infty)^\mathbb{N}\to [0,+\infty)$ given by
		$$F(a)=\sum_{n=1}^\infty \frac{1}{2^n}\frac{a_n}{1+a_n}$$
		for all $a=(a_n)_{n\in\mathbb{N}}\in [0,+\infty)^\mathbb{N}.$
		
		It is well known that $F$ is a metric aggregation function on products (see \cite[Theorem 22.3]{Willard}).
	\end{example}

	The proof of the following result is analogous to that of \cite[Chapter 10, Lemma 1]{Dobos98} concerning metric aggregation functions on products. We include it here for completeness.
	
	\begin{proposition}[\mbox{compare with \cite[Chapter 10, Lemma 1]{Dobos98}}]\label{prop:strongqmaf}
		Let $F: [0,+\infty)^I \to [0,+\infty)^I$ be a (quasi-)metric aggregation function on products. 
		If $\big\{(X_i,d_i):i \in I\big\}$ is a family of (quasi-)metric spaces, then
		\[
		\prod_{i \in I} \mathscr{T}(d_i) \subseteq \mathscr{T}(F\circ d_\Pi).
		\]
	\end{proposition}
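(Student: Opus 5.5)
The plan is to show that every subbasic open set of the product topology is $\mathscr{T}(F\circ d_\Pi)$-open. Intersections of finitely many such sets are then open as well, since a topology is closed under finite intersections. Write $d=F\circ d_\Pi$. Fix $j\in I$, a point $z_j\in X_j$, a radius $r>0$, and the subbasic set
\[
U=\pi_j^{-1}\big(B_{d_j}(z_j,r)\big)=\Big\{y\in\prod_{i\in I}X_i: d_j(z_j,y_j)<r\Big\}.
\]
For an arbitrary $x\in U$, put $\varepsilon=r-d_j(z_j,x_j)>0$. By the triangle inequality for $d_j$, $B_{d_j}(x_j,\varepsilon)\subseteq B_{d_j}(z_j,r)$. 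It therefore suffices to find $\delta>0$ with
\[
B_{d}(x,\delta)\subseteq \pi_j^{-1}\big(B_{d_j}(x_j,\varepsilon)\big).
\]
Here balls are the forward balls $B_d(x,\delta)=\{y: d(x,y)<\delta\}$, which generate $\mathscr{T}(d)$ in the quasi-metric case as well.

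The key step is a lower bound for $F$. Let $e^{j,\varepsilon}\in[0,+\infty)^I$ be the vector with $\varepsilon$ in coordinate $j$ and $0$ elsewhere. Since $e^{j,\varepsilon}\neq 0_I$ and $F^{-1}(0)=\{0_I\}$ (Theorem \ref{thm:Dobos} in the metric case, Theorem \ref{thm:MayorValero} in the quasi-metric case), we have $F(e^{j,\varepsilon})>0$. I claim that every $a\in[0,+\infty)^I$ with $a_j\ge\varepsilon$ satisfies $F(a)\ge \tfrac12 F(e^{j,\varepsilon})$. Such an $a$ satisfies $e^{j,\varepsilon}\le a$.

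In the quasi-metric case the claim follows from monotonicity (Theorem \ref{thm:MayorValero}), which gives even $F(a)\ge F(e^{j,\varepsilon})$. In the metric case, with $b=e^{j,\varepsilon}\le a$, the triple $(a,a,b)$ is a triangle triplet: $b\le 2a$ and $a\le a+b$. By Theorem \ref{thm:Dobos}, $\big(F(a),F(a),F(b)\big)$ is then a triangle triplet, so $F(b)\le 2F(a)$.

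Now set $\delta=\tfrac12F(e^{j,\varepsilon})>0$. Suppose $d(x,y)<\delta$, and assume for contradiction that $d_j(x_j,y_j)\ge\varepsilon$. Then $a=d_\Pi(x,y)$ has $a_j\ge\varepsilon$, so the claim gives $d(x,y)=F(a)\ge\delta$, which is impossible. Hence $d_j(x_j,y_j)<\varepsilon$ and $y\in U$, so $U$ is $\mathscr{T}(d)$-open, which proves the statement.

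The only delicate point is the lower bound in the metric case, where monotonicity of $F$ is not available. It is handled by the triangle-triplet trick above, and everything else is routine.
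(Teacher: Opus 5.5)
Your proof is correct and follows the paper's argument. Both bound $F(d_\Pi(x,y))$ below by $\tfrac12F(e)$ for a vector $e$ supported at coordinate $j$, via the triangle triplet $(e,d_\Pi(x,y),d_\Pi(x,y))$; the paper puts $2\varepsilon$ rather than $\varepsilon$ in coordinate $j$, and in the quasi-metric case it uses ``$F(a)\le F(b)+F(c)$ whenever $a\le b+c$'' instead of monotonicity, neither of which matters. Your re-centering step $\varepsilon=r-d_j(z_j,x_j)$ makes explicit a point the paper glosses over, since it writes $x_j$ both for the center of the ball and for the $j$-th coordinate of the point $x$.
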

	
	\begin{proof}
		Let $j\in I,$ $x_j\in X_j$, $\varepsilon>0$ and $\pi_j^{-1}\big(B_{d_j}(x_j,\varepsilon)\big)$ be a subbasic open set in the product topology $\prod_{i \in I} \mathscr{T}(d_i).$ Consider $x \in \pi_j^{-1}(B_{d_j}(x_j,\varepsilon)).$
		
		Define $e \in [0,+\infty)^I$ as
		\[
		e_i =
		\begin{cases}
			0 & \text{if } i \neq j, \\
			2\varepsilon & \text{if } i = j,
		\end{cases}
		\]
		for all $i\in I.$
		By Theorem \ref{thm:Dobos} (or Theorem \ref{thm:MayorValero}, in the quasi-metric case), $\delta:=\frac{F(e)}{2}>0$. 
		
		Let us check that
		\[
		x \in B_{F\circ d_\Pi}(x,\delta) \subseteq \pi_j^{-1}\big(B_{d_j}(x_j,\varepsilon)\big).
		\]
		
		Suppose that $y \in B_{F\circ d_\Pi}(x,\delta)$, that is,
		\[
		F\circ d_\Pi(x,y) = F\big( (d_i(x_i, y_i))_{i \in I} \big) < \delta.
		\]
		If $y \notin \pi_j^{-1} \big(B_{d_j}(x_j, \varepsilon)\big)$, then
		\[
		d_j(x_j, y_j) > \varepsilon.
		\]
		Hence $(e,d_\Pi(x,y),d_\Pi(x,y))$ is a triangle triplet
		so by  Theorem \ref{thm:Dobos} (or Theorem \ref{thm:MayorValero}, in the quasi-metric case)
		
		\[
		2\delta=F(e) \leq 2 F(d_\Pi(x,y)), 
		\]
		which contradicts $y \in B_{F\circ d_\Pi}(x,\delta).$ Therefore, $y \notin \pi_j^{-1} \big(B_{d_j}(x_j, \varepsilon)\big)$ which finishes the proof.	
	\end{proof}
	
	\begin{remark}	
		The previous results does not hold for (quasi-)pseudometric aggregation functions on products.
		Notice that $F : [0,+\infty)^I \to [0,+\infty]$ given by $F(x) = 0$ for all $x\in [0,+\infty)^I$, is a (quasi-)pseudometric aggregation function on products. In fact, for any family of (quasi-)pseudometric spaces $\big\{(X_i, d_i):i\in I\big\},$ it follows that  $F\circ d_\Pi$ is the indiscrete pseudometric on $\prod_{i\in I} X_i$. Consequently, $\mathscr{T}(F\circ d_\Pi)$ is the indiscrete topology. Thus, we can conclude
		\[
		\prod_{i \in I} \mathscr{T}(d_i) \not\subseteq \mathscr{T}(F\circ d_\Pi)
		\]
		unless $\prod_{i \in I} \mathscr{T}(d_i)$ is itself the indiscrete topology. 
	\end{remark}
	
	To characterize when the inclusion of Proposition \ref{prop:strongqmaf} holds for quasi-pseudometric aggregation functions on products, it is essential the set $F^{-1}(0),$ as demonstrated by the following result.

	\begin{proposition}\label{prop:sqpmafp_prod_inclusion}
		Let $F : [0,+\infty)^I \to [0,+\infty)$ be a (quasi-)pseudometric aggregation function on products. Then $\prod_{i \in I} \mathscr{T}(d_i) \subseteq \mathscr{T}(F\circ d_\Pi),$ for every family $\big\{(X_i, d_i) : i \in I\big\}$ of (quasi-)pseudometric spaces, if and only if 
		$F^{-1}(0)=\{0_I\}.$
	\end{proposition}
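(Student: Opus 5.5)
The plan is to prove the two implications separately. Both rely on monotonicity of $F$, which holds in the quasi-pseudometric case by Theorem \ref{thm:FBRL}. In the pseudometric case we use the triangle-triplet property from Theorem \ref{thm:PraTri} instead.

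\emph{Sufficiency.} Assume $F^{-1}(0)=\{0_I\}$. We reproduce the argument of Proposition \ref{prop:strongqmaf}; it never used the separation axiom of the $d_i$, only that $F(e)>0$ for the vector $e$ equal to $2\varepsilon$ in coordinate $j$ and $0$ elsewhere. So fix a subbasic open set $\pi_j^{-1}\big(B_{d_j}(x_j,\varepsilon)\big)$ and a point $x$ in it, and set $\delta=F(e)/2>0$. Since that set is open, we may shrink $\varepsilon$ and assume the ball is centred at $x_j$ itself. Now suppose $F(d_\Pi(x,y))<\delta$ but $d_j(x_j,y_j)\ge\varepsilon$. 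Then $e\le d_\Pi(x,y)+d_\Pi(x,y)$, so Theorem \ref{thm:FBRL}(4) gives $F(e)\le 2F(d_\Pi(x,y))<2\delta$, a contradiction. In the symmetric case, $(e,d_\Pi(x,y),d_\Pi(x,y))$ is a triangle triplet, and Theorem \ref{thm:PraTri} yields the same inequality. Hence $B_{F\circ d_\Pi}(x,\delta)\subseteq \pi_j^{-1}\big(B_{d_j}(x_j,\varepsilon)\big)$, and the product topology is contained in $\mathscr{T}(F\circ d_\Pi)$.

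\emph{Necessity.} Suppose $F(a)=0$ for some $a\ne 0_I$, and pick $j$ with $a_j>0$. We need a family of (pseudo)metric spaces where the inclusion fails. Take $X_i=\{0\}$ for $i\ne j$, and $X_j=\{0,1\}$ with $d_j(0,1)=d_j(1,0)=a_j$, which is a metric. The points $x=(0)_{i}$ and $y$, which agrees with $x$ except $y_j=1$, satisfy $d_\Pi(x,y)=d_\Pi(y,x)=a_je_j'$, where $e_j'$ denotes the $j$-th unit vector. By monotonicity, $F(a_je_j')\le F(a)=0$. Hence $F\circ d_\Pi(x,y)=0$, so every $F\circ d_\Pi$-ball around $x$ contains $y$. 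On the other hand, $\pi_j^{-1}\big(B_{d_j}(0,a_j)\big)$ is open in the product topology, contains $x$, and does not contain $y$. So it is not $\mathscr{T}(F\circ d_\Pi)$-open, and the inclusion fails.

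The main obstacle is getting monotonicity in the symmetric pseudometric case. Theorem \ref{thm:PraTri} does not list monotonicity, and metric-preserving functions need not be monotone. I would avoid monotonicity there and use $a$ directly. Take all $X_i=\{0,1\}$ with $d_i(0,1)=a_i$; this is a pseudometric, and it is allowed to be zero. Let $x=(0)_i$ and $y=(1)_i$, so that $d_\Pi(x,y)=a$ and $F\circ d_\Pi(x,y)=0$. The set $\pi_j^{-1}\big(B_{d_j}(0,a_j)\big)$ is then open, contains $x$ and excludes $y$. Since this construction works in every case, I would simply use it throughout the necessity proof.
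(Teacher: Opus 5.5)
Your proof is correct. The sufficiency direction is the paper's own: the paper simply points back to the argument of Proposition \ref{prop:strongqmaf}. Your write-up is actually a little cleaner, since you recentre the ball at $x_j$ explicitly and negate $d_j(x_j,y_j)<\varepsilon$ correctly as $d_j(x_j,y_j)\ge\varepsilon$. One small ordering point: define $e$ only after shrinking $\varepsilon$.

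For necessity you take a genuinely different route. The paper first reduces to the single-coordinate vector $b$, equal to $a_j$ at $j$ and $0$ elsewhere. Since $(b,a,a)$ is a triangle triplet, Theorems \ref{thm:PraTri} and \ref{thm:FBRL} give $F(b)\le 2F(a)=0$. The paper then puts indiscrete pseudometrics on the factors $i\neq j$ and $a_j$ times the discrete metric on factor $j$. This makes $F\circ d_\Pi\equiv 0$, so $\mathscr{T}(F\circ d_\Pi)$ is indiscrete while the product topology is not.

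Your final construction, $d_i=a_i d_D$ on $\{0,1\}$ for every $i$, realizes $a$ itself as $d_\Pi(x,y)$. So in this direction you use nothing about $F$ except $F(a)=0$, and a single witness pair suffices: every $F\circ d_\Pi$-ball around $x$ contains $y$. This is the same device the paper later uses for sets in Proposition \ref{prop:sqpmafs_sup_in_agg}. What the paper's version buys in exchange is the stronger conclusion that the aggregated pseudometric vanishes identically.

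Finally, the obstacle you identified in the symmetric case is not a real one. Monotonicity is not needed for the reduction, because the triangle-triplet property applied to $(b,a,a)$ already gives $F(b)\le 2F(a)$. That is exactly how the paper handles the pseudometric case.
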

	
	\begin{proof}
		For proving the necessary condition, we proceed by contradiction. Assume that there exists $a\in [0,+\infty)^I$ such that $F(a)=0$ and $a\neq 0_I.$ Then, we can find $j \in I$ such that $a_j\neq 0.$ Now, let $b\in [0,+\infty)^I$ be defined as follows:
		\[b_i=\begin{cases}
			0&\text{ if }i\neq j,\\
			a_j&\text{ if }i=j,
		\end{cases}\]
		for all $i\in I.$ Then $(b,a,a)$ is a triangle triplet. By Theorem \ref{thm:PraTri}  (Theorem \ref{thm:FBRL} in the quasi-pseudometric case), $F(b)\leq 2 F(a)=0$ so $F(b)=0.$
		
		Let $X$ be a set with at least two different elements. Given $i\in I\backslash\{j\}$, let $d_i$ be the indiscrete pseudometric. Define $d_j$ by
		\[
		d_j(\xi,\eta) =
		\begin{cases}
			a_j & \text{if } \xi \neq \eta, \\
			0 & \text{if } \xi = \eta,
		\end{cases}
		\]
		for all $\xi,\eta\in X.$ 
		Then $\big\{(X, d_i) : i \in I\big\}$ is a family of pseudometric spaces. Moreover, the product topology  $\prod_{i \in I} \mathscr{T}(d_i)$ on $X^I$ is not the indiscrete topology. Indeed, for any $\xi \in X$, the singleton $\{\xi\}$ is $\mathscr{T}(d_j)$-open and, therefore, $\pi_j^{-1}(\{\xi\}) \in \prod_{i \in I} \mathscr{T}(d_i)$. Since $\pi_j^{-1}(\{\xi\})$ is neither empty nor equal to $X^I$, the product topology is not indiscrete.
		
		Nevertheless, given $x,y\in X^I$
		\[
		F\circ d_\Pi(x,y) = F\big((d_i(x_i,y_i))_{i \in I}\big) =\begin{cases} F(b) = 0 & \text{ if  } x_j \neq y_j,\\
			F(0)=0&\text{ if }x_j=y_j.\end{cases}
		\]
		So $\mathscr{T}(F\circ d_\Pi)$ is the indiscrete topology. Then
		\[
		\prod_{i \in I} \mathscr{T}(d_i) \nsubseteq \mathscr{T}(F\circ d_\Pi),
		\]
		which is a contradiction.

		The sufficiency condition follows as the proof of Proposition \ref{prop:strongqmaf}
	\end{proof}

	As a consequence of our previous results together with Theorems \ref{thm:Dobos}, \ref{thm:MayorValero}, \ref{thm:FBRL}, \ref{thm:PraTri}, we deduce the following.
	
	\begin{theorem}\label{thm:strongqmafp_prod_in_ag}
		Let $F : [0,+\infty)^I \to [0,+\infty)$ be a (quasi-)pseudometric aggregation function on products. The following statements are equivalent:
		\begin{enumerate}[(1)]
			\item $F$ is a (quasi-)metric aggregation function on products;
			\item $F^{-1}(0)=\{0_I\};$
			\item  $\prod_{i \in I} \mathscr{T}(d_i) \subseteq \mathscr{T}(F\circ d_\Pi),$ for every family $\big\{(X_i, d_i) : i \in I\big\}$ of (quasi-)pseudometric spaces.
		\end{enumerate}
	\end{theorem}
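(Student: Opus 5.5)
The plan is to close the cycle of implications using the characterization theorems of Section~\ref{sec2} together with Proposition~\ref{prop:sqpmafp_prod_inclusion}. The statement is read in parallel form: in the pseudometric case, (1) says ``metric aggregation function'' and (3) refers to pseudometric spaces; in the quasi-pseudometric case, (1) says ``quasi-metric aggregation function'' and (3) refers to quasi-pseudometric spaces.

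\emph{(1)$\Rightarrow$(2).} If $F$ is a metric aggregation function on products, Theorem~\ref{thm:Dobos} gives $F^{-1}(0)=\{0_I\}$ directly. In the quasi-metric case, Theorem~\ref{thm:MayorValero} gives the same conclusion.

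\emph{(2)$\Rightarrow$(1).} Since $F$ is assumed to be a pseudometric aggregation function on products, Theorem~\ref{thm:PraTri} shows that $F$ maps triangle triplets to triangle triplets. Combining this with $F^{-1}(0)=\{0_I\}$, Theorem~\ref{thm:Dobos} yields that $F$ is a metric aggregation function on products.

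In the quasi-pseudometric case, Theorem~\ref{thm:FBRL} shows that $F$ is subadditive and monotone. Together with $F^{-1}(0)=\{0_I\}$, condition (3) of Theorem~\ref{thm:MayorValero} holds, so $F$ is a quasi-metric aggregation function on products.

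\emph{(2)$\Leftrightarrow$(3).} This is exactly Proposition~\ref{prop:sqpmafp_prod_inclusion}, applied in the pseudometric or quasi-pseudometric setting as appropriate.

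The main point requiring care is the sufficiency direction inside Proposition~\ref{prop:sqpmafp_prod_inclusion}, which the paper only says ``follows as'' Proposition~\ref{prop:strongqmaf}. The argument given there uses the triangle-triplet property for metrics, or the inequality $F(a)\le F(b)+F(c)$ whenever $a\le b+c$ for quasi-metrics. It also uses $F(e)>0$ for the vector $e$ with $2\varepsilon$ in coordinate $j$ and $0$ elsewhere. The triangle-triplet property (respectively the inequality) is available by Theorem~\ref{thm:PraTri} (respectively Theorem~\ref{thm:FBRL}), and $F(e)>0$ follows from (2).

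Nothing in that argument uses separation of the $d_i$. One checks that the key step still goes through: if $d_j(x_j,y_j)\ge\varepsilon$ then $e\le 2\,d_\Pi(x,y)$, so $F(e)\le 2F(d_\Pi(x,y))$. Hence the argument applies verbatim to (quasi-)pseudometric spaces, and once this is verified the three statements are equivalent.
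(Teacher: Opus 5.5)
Your proposal is correct and follows the same route as the paper. You obtain (1)$\Leftrightarrow$(2) from Theorems~\ref{thm:Dobos}, \ref{thm:PraTri}, \ref{thm:MayorValero} and \ref{thm:FBRL}, you obtain (2)$\Leftrightarrow$(3) from Proposition~\ref{prop:sqpmafp_prod_inclusion}, and your check that the argument of Proposition~\ref{prop:strongqmaf} carries over verbatim to (quasi-)pseudometric spaces (using $d_j(x_j,y_j)\ge\varepsilon$) is exactly the detail the paper leaves implicit.
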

	
	So far, we have considered the problem of characterizing the inclusion $\prod_{i \in I} \mathscr{T}(d_i) \subseteq \mathscr{T}(F\circ d_\Pi)$. We now turn our attention to establishing results for the reverse inclusion.

	\begin{theorem}\label{thm:strongqpmafp_ag_in_prod}
		Let $F : [0,+\infty)^I \to [0,+\infty)$ be a (quasi-)(pseudo)metric aggregation function on products. Then $\mathscr{T}(F\circ d_\Pi) \subseteq \prod_{i \in I} \mathscr{T}(d_i),$ for every family $\big\{(X_i,d_i) : i \in I\big\}$ of (quasi-)(pseudo)metric spaces,  if and only if $F$ is continuous at $0_I$.
	\end{theorem}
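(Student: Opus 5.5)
We prove the two implications separately. The sufficiency direction uses monotonicity and subadditivity to control $F$ near $0_I$; the necessity direction builds a single family of spaces that detects a discontinuity of $F$ at $0_I$.

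\emph{Sufficiency.} Assume $F$ is continuous at $0_I$, where $[0,+\infty)^I$ carries the product topology. Since $F(0_I)=0$, for a given $\varepsilon>0$ continuity gives a finite set $J\subseteq I$ and $\delta>0$ such that $F(a)<\varepsilon$ whenever $a_j<\delta$ for all $j\in J$. Fix a point $x\in\prod_{i\in I}X_i$. Consider the basic product neighbourhood
\[
U=\bigcap_{j\in J}\pi_j^{-1}\big(B_{d_j}(x_j,\delta)\big).
\]
For $y\in U$ the vector $d_\Pi(x,y)$ has coordinates $d_j(x_j,y_j)<\delta$ for $j\in J$, so $F(d_\Pi(x,y))<\varepsilon$ and hence $U\subseteq B_{F\circ d_\Pi}(x,\varepsilon)$. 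To show that every $\mathscr{T}(F\circ d_\Pi)$-open set is product-open, we must do the same at every point $z\in B_{F\circ d_\Pi}(x,\varepsilon)$, not only at the centre. Set $r=\varepsilon-F(d_\Pi(x,z))>0$ and choose $U$ as above for $r$ at the point $z$. For $y\in U$ we have $d_\Pi(x,y)\le d_\Pi(x,z)+d_\Pi(z,y)$ componentwise. By condition (4) of Theorem \ref{thm:FBRL}, which covers all four variants because each such $F$ is at least a quasi-pseudometric aggregation function, this gives
\[
F(d_\Pi(x,y))\le F(d_\Pi(x,z))+F(d_\Pi(z,y))<F(d_\Pi(x,z))+r=\varepsilon .
\]
Thus every ball is product-open.

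\emph{Necessity.} Suppose $F$ is not continuous at $0_I$. Then there is $\varepsilon>0$ such that for every finite $J\subseteq I$ and every $\delta>0$ some $a$ satisfies $a_j<\delta$ for $j\in J$ and $F(a)\ge\varepsilon$. We construct one family of spaces in which the ball $B_{F\circ d_\Pi}(x,\varepsilon)$ fails to be a product neighbourhood of $x$. Take every $X_i=[0,+\infty)$ with the usual metric $d_i(s,t)=|s-t|$; this is a metric, so it serves in all four settings. Let $x=0_I$. For any $y\in[0,+\infty)^I$ we get $d_\Pi(x,y)=y$, so $F\circ d_\Pi(x,y)=F(y)$. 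Any basic product neighbourhood of $0_I$ contains a set of the form $\{y: y_j<\delta \text{ for } j\in J\}$ with $J$ finite and $\delta>0$. By the choice of $\varepsilon$, that set contains some $a$ with $F(a)\ge\varepsilon$. Hence the open ball $B_{F\circ d_\Pi}(0_I,\varepsilon)$ is not a product neighbourhood of $0_I$, so it is $\mathscr{T}(F\circ d_\Pi)$-open but not product-open, which contradicts the hypothesis.

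The main obstacle is making sure the discontinuity witnesses $a$ really arise as distance vectors, and that coordinates with $a_i$ large cause no trouble. Using $X_i=[0,+\infty)$ with $x=0_I$ realises every vector $a$ exactly as $d_\Pi(0_I,a)$, so the product neighbourhood condition and the continuity condition coincide. A minor point is to work with basic neighbourhoods rather than subbasic ones when $I$ is infinite, which is why finite sets $J$ appear in both directions.
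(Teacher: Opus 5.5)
Your proof follows the paper's in both directions. For necessity, the paper takes $X_i=\mathbb{R}$ with the Euclidean metric and argues directly rather than by contradiction, but the mechanism is yours: every $a\in[0,+\infty)^I$ equals $d_\Pi(0_I,a)$, so $F\circ d_\Pi(0_I,\cdot)$ is just $F$. For sufficiency, the paper stops after your first step, i.e.\ after showing that each ball $B_{F\circ d_\Pi}(x,\varepsilon)$ contains the product neighbourhood $\bigcap_{j\in J}\pi_j^{-1}\big(B_{d_j}(x_j,\delta)\big)$ of its centre.

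Your additional step rests on a false claim. It is not true that each of the four kinds of aggregation function is at least a quasi-pseudometric aggregation function. Consider the function of \cite[Chapter 5, Example 1]{Dobos98}, $F(t)=t$ for $t\le 2$ and $F(t)=1+\frac{1}{t-1}$ for $t>2$. It is a metric (hence pseudometric) aggregation function on products, but it is not monotone ($F(3)<F(2)$). So condition (4) of Theorem~\ref{thm:FBRL} fails for it; the largest of the four classes is the pseudometric one, not the quasi-pseudometric one.

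The inequality you need, $F(d_\Pi(x,y))\le F(d_\Pi(x,z))+F(d_\Pi(z,y))$, is still true in every case. The correct reason is that $F\circ d_\Pi$ is itself a (quasi-)(pseudo)metric by hypothesis, and this is just its triangle inequality.

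In fact the step can be deleted. A set $G$ lies in $\mathscr{T}(F\circ d_\Pi)$ exactly when each $z\in G$ is the centre of some ball $B_{F\circ d_\Pi}(z,r)\subseteq G$. Your centre argument at $z$ with radius $r$ then already gives a product neighbourhood of $z$ inside $G$. So, contrary to what you assert, you do not need to treat non-central points of a ball.
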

	
	\begin{proof}
		
		To prove sufficiency, let us consider the family of (quasi-)(pseudo)metric spaces $\big\{(\mathbb{R},d_e):i\in I\big\}$, where $d_e$ is the Euclidean metric. By assumption $\mathscr{T}(F\circ d_\Pi) \subseteq \prod_{i \in I} \mathscr{T}(d_e).$ Hence given $\varepsilon>0$ we can find $\delta>0$ and a finite subset $J$ of $I$ such that
		\[\bigcap_{j\in J}\pi_j^{-1}\big(B_{d_e}(0,\delta)\big)\subseteq B_{F\circ d_\Pi}(0_I,\varepsilon).\] 
		Let $a\in [0,+\infty)^I$ such that $0\leq a_j<\delta$ for all $j\in J.$ Then $d_e(0,a_j)=a_j<\delta$ for all $j\in J$ so $a\in \bigcap_{j\in J}\pi_j^{-1}\big(B_{d_e}(0,\delta)\big).$ Hence 
		\[(F\circ d_\Pi)(0,a)=F\Big(\big(d_e(0,a_i)\big)_{i\in I}\Big)=F(a)=|F(a)-F(0_I)|<\varepsilon\]
		so $F$ is continuous at $0_I.$
		
		Conversely, let $x \in \prod_{i \in I} X_i$ and $\varepsilon > 0$. Since $F$ is continuous at $0_I$, we can find $\delta > 0$ and $J \subset I$ finite such that:
		\[
		\text{if } a \in [0,+\infty)^I, \quad 0 \leq a_j <\delta \ \forall j \in J \Rightarrow |F(a) - F(0_I)| = F(a) < \varepsilon.
		\]
		
		Then
		\[
		x \in \bigcap_{j \in J} \pi_j^{-1}\big(B_{d_j}(x_j, \delta)\big) \subseteq B_{F\circ d_\Pi}(x, \varepsilon).
		\]
		
		In fact, if $y \in \bigcap_{j \in J} \pi_j^{-1}\big(B_{d_j}(x_j, \delta)\big)$ then $d_j(x_j, y_j) < \delta$ for all $j \in J$ and
		\[
		F\circ d_\Pi(x,y) = F(d_i(x_i,y_i))_{i \in I} < \varepsilon.
		\]
	\end{proof}
	
	From Proposition \ref{prop:strongqmaf} and Theorem \ref{thm:strongqpmafp_ag_in_prod}, we obtain the following characterization of strongly (quasi-)metric aggregation functions on products.

	As an immediate consequence of Theorems \ref{thm:strongqmafp_prod_in_ag} and \ref{thm:strongqpmafp_ag_in_prod},  we can characterize strongly (quasi-)pseudometric aggregation functions as follows.
	
	\begin{theorem}\label{thm:strongcharac}
		Let $F : [0,+\infty)^I \to [0,+\infty)$ be a (quasi-)pseudometric aggregation function on products. The following statements are equivalent:
		\begin{enumerate}[(1)]
			\item $F$ is a strongly (quasi-)pseudometric aggregation function on products;
			\item $F$ is a strongly (quasi-)metric aggregation function on products;
			\item $F^{-1}(0)=\{0_I\}$ and $F$ is continuous at $0_I.$
		\end{enumerate}
	\end{theorem}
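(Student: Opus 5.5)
The plan is to prove the cycle (1)$\Rightarrow$(3)$\Rightarrow$(2)$\Rightarrow$(1), treating the pseudometric and quasi-pseudometric readings in parallel. For the quasi-pseudometric case the earlier results to cite are Theorems \ref{thm:FBRL} and \ref{thm:MayorValero}; for the pseudometric case they are Theorems \ref{thm:PraTri} and \ref{thm:Dobos}. Throughout, $F$ is assumed to be a (quasi-)pseudometric aggregation function on products.

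For (1)$\Rightarrow$(3), assume $F$ is strongly (quasi-)pseudometric. Then for every family of (quasi-)pseudometric spaces both inclusions hold:
\[
\prod_{i\in I}\mathscr{T}(d_i)\subseteq \mathscr{T}(F\circ d_\Pi)
\quad\text{and}\quad
\mathscr{T}(F\circ d_\Pi)\subseteq \prod_{i\in I}\mathscr{T}(d_i).
\]
Proposition \ref{prop:sqpmafp_prod_inclusion} (equivalently Theorem \ref{thm:strongqmafp_prod_in_ag}) applied to the first inclusion gives $F^{-1}(0)=\{0_I\}$. Theorem \ref{thm:strongqpmafp_ag_in_prod} applied to the second gives continuity of $F$ at $0_I$. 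Its necessity argument only uses the Euclidean family $\{(\mathbb{R},d_e):i\in I\}$, which is a family of (quasi-)pseudometric spaces, so it applies here.

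For (3)$\Rightarrow$(2), Theorem \ref{thm:strongqmafp_prod_in_ag} shows that $F^{-1}(0)=\{0_I\}$ makes $F$ a (quasi-)metric aggregation function on products; concretely, Theorem \ref{thm:FBRL} together with Theorem \ref{thm:MayorValero}. We now take an arbitrary family of (quasi-)metric spaces. Proposition \ref{prop:strongqmaf} gives $\prod_{i\in I}\mathscr{T}(d_i)\subseteq\mathscr{T}(F\circ d_\Pi)$. The sufficiency part of Theorem \ref{thm:strongqpmafp_ag_in_prod}, using continuity at $0_I$, gives the reverse inclusion. So $F$ is a strongly (quasi-)metric aggregation function on products.

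The implication (2)$\Rightarrow$(1) is the only step needing care, because strong behaviour is assumed only for (quasi-)metric families but must be proved for (quasi-)pseudometric ones. The approach is to first extract (3) from (2), and then rerun the (3)$\Rightarrow$(1) argument for pseudometric families.
\begin{itemize}
\item Since a strongly (quasi-)metric aggregation function is in particular a (quasi-)metric aggregation function, $F^{-1}(0)=\{0_I\}$ follows from Theorem \ref{thm:MayorValero} (or Theorem \ref{thm:Dobos}).
\item Continuity at $0_I$ follows from the necessity argument of Theorem \ref{thm:strongqpmafp_ag_in_prod}. That argument only used the Euclidean family, which is a family of (quasi-)metric spaces, so the hypothesis in (2) suffices.
\item With (3) in hand, for any (quasi-)pseudometric family, Proposition \ref{prop:sqpmafp_prod_inclusion} gives $\prod_{i\in I}\mathscr{T}(d_i)\subseteq\mathscr{T}(F\circ d_\Pi)$. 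The sufficiency half of Theorem \ref{thm:strongqpmafp_ag_in_prod} gives the reverse inclusion, since its proof never used separation.
\end{itemize}
Hence $F$ is strongly (quasi-)pseudometric, which closes the cycle.
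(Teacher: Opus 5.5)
Your proposal is correct and follows essentially the same route as the paper, which obtains the theorem as an immediate consequence of Theorem \ref{thm:strongqmafp_prod_in_ag} (equivalently Proposition \ref{prop:sqpmafp_prod_inclusion}), Proposition \ref{prop:strongqmaf}, and Theorem \ref{thm:strongqpmafp_ag_in_prod}. You also make explicit a point the paper leaves implicit: the necessity half of Theorem \ref{thm:strongqpmafp_ag_in_prod} uses only the Euclidean family, so continuity at $0_I$ can already be extracted from the (quasi-)metric hypothesis in (2).
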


	\begin{theorem}\label{thm:ChactStronglyProd}
		Given a function $F : [0,+\infty)^I \to [0,+\infty)$, consider the following statements:
		\begin{enumerate}[(1)]
			\item $F$ is a strongly quasi-metric aggregation function on products;
			\item $F$ is a strongly quasi-pseudometric aggregation function on products;
			\item $F^{-1}(0)=\{0_I\},$ $F$ is subadditive, monotone  and continuous at $0_I;$
			\item $F$ is a strongly metric aggregation function on products;
			\item $F$ is a strongly pseudometric aggregation function on products;
			\item $F^{-1}(0)=\{0_I\},$ $F$ is continuous at $0_I$ and $\big(F(a),F(b),F(c)\big)$ is a triangle triplet whenever $(a,b,c)$ is a triangle triplet.
		\end{enumerate}
		Then
		$$\text{(1)} \Leftrightarrow \text{(2)}   \Leftrightarrow \text{(3)}  \Longrightarrow \text{(4)} \Leftrightarrow \text{(5)} \Leftrightarrow \text{(6)}.$$
	\end{theorem}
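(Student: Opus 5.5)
The plan is to assemble Theorem \ref{thm:ChactStronglyProd} almost entirely from Theorem \ref{thm:strongcharac} together with the characterizations in Theorems \ref{thm:Dobos}, \ref{thm:PraTri}, \ref{thm:MayorValero} and \ref{thm:FBRL}. The only nontrivial bookkeeping is that Theorem \ref{thm:strongcharac} presupposes $F$ is already a (quasi-)pseudometric aggregation function on products. So for each of the six statements I will first check that this hypothesis is available. In (1), (2), (4) and (5) it holds by definition, since a strongly aggregation function is in particular an aggregation function, and every metric/quasi-metric aggregation function is also a pseudometric/quasi-pseudometric one by Theorems \ref{thm:Dobos}--\ref{thm:FBRL}. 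In (3) and (6) it follows from Theorem \ref{thm:FBRL} and Theorem \ref{thm:PraTri} respectively, because $F^{-1}(0)=\{0_I\}$ gives $F(0_I)=0$.

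\emph{Step 1: (1)$\Leftrightarrow$(2)$\Leftrightarrow$(3).} If (1) or (2) holds, then $F$ is a quasi-pseudometric aggregation function on products. Theorem \ref{thm:strongcharac} (quasi version) gives (1)$\Leftrightarrow$(2)$\Leftrightarrow$[$F^{-1}(0)=\{0_I\}$ and $F$ continuous at $0_I$]. Theorem \ref{thm:FBRL} adds subadditivity and monotonicity, which yields (3). Conversely, from (3) Theorem \ref{thm:FBRL} shows that $F$ is a quasi-pseudometric aggregation function on products, and Theorem \ref{thm:strongcharac} then gives (2).

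\emph{Step 2: (4)$\Leftrightarrow$(5)$\Leftrightarrow$(6).} This is the same argument with the non-quasi versions. If (4) or (5) holds, $F$ is a pseudometric aggregation function on products. Theorem \ref{thm:PraTri} supplies the triangle-triplet condition, and Theorem \ref{thm:strongcharac} supplies $F^{-1}(0)=\{0_I\}$ and continuity at $0_I$, so (6) follows. Conversely, (6) together with Theorem \ref{thm:PraTri} makes $F$ a pseudometric aggregation function, and Theorem \ref{thm:strongcharac} closes the loop.

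\emph{Step 3: (3)$\Rightarrow$(6).} The only extra fact needed is that subadditive, monotone $F$ with $F(0_I)=0$ maps triangle triplets to triangle triplets. Given a triangle triplet $(a,b,c)$, we have $a\le b+c$, so condition (4) of Theorem \ref{thm:FBRL} (equivalent to subadditive plus monotone) gives $F(a)\le F(b)+F(c)$; the other two inequalities follow symmetrically. Together with $F^{-1}(0)=\{0_I\}$ and continuity at $0_I$, this is exactly (6).

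I do not expect a real obstacle here. The delicate point is only making sure Theorem \ref{thm:strongcharac} is invoked when its standing hypothesis holds, which is why I establish the aggregation property first in each direction. Reverse implications such as (6)$\Rightarrow$(3) are not claimed; they would fail, for example, for nonmonotone metric-preserving functions.
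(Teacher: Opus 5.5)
Your proposal is correct and follows essentially the same route as the paper: you first secure the standing hypothesis of Theorem~\ref{thm:strongcharac} via Theorems~\ref{thm:Dobos}--\ref{thm:FBRL}, read off the two equivalence blocks, and bridge them by noting that subadditivity plus monotonicity forces preservation of triangle triplets. The differences are cosmetic. You prove (3)$\Rightarrow$(6) and then use (6)$\Rightarrow$(4), whereas the paper goes from (3) to (4) directly and cites Theorem~\ref{thm:MayorValero} where Theorem~\ref{thm:Dobos} is the one actually needed. You also make explicit the backward implications, such as (2)$\Rightarrow$(1) and (5)$\Rightarrow$(4), which the paper leaves implicit in Theorem~\ref{thm:strongcharac}.
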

	
	\begin{proof}
		(1) $\Rightarrow$ (2) 
		Assume that $F$ is a strongly quasi-metric aggregation function on products. 
		By Theorems \ref{thm:MayorValero} and \ref{thm:FBRL}, it follows that $F$ is a quasi-pseudometric aggregation function on products. 
		The claim then follows directly from Theorem~\ref{thm:strongcharac}.

		(2) $\Leftrightarrow$ (3) This is an immediate consequence of Theorems \ref{thm:FBRL} and \ref{thm:strongcharac}.

		(3) $\Rightarrow$ (4) 
		A straightforward computation shows that subadditivity together with monotonicity ensures the preservation of triangle triplets. 
		Hence, by Theorem \ref{thm:MayorValero}, $F$ is a metric aggregation function on products. 
		Moreover, Theorem \ref{thm:PraTri} guarantees that $F$ is also a pseudometric aggregation function on products. 
		The desired conclusion follows from Theorem~\ref{thm:strongcharac}.

		(4) $\Rightarrow$ (5) 
		By applying Theorems~\ref{thm:MayorValero} and~\ref{thm:PraTri}, we obtain that $F$ is a pseudometric aggregation function on products. 
		The implication then follows by Theorem~\ref{thm:strongcharac}.

		(5) $\Leftrightarrow$ (6) This equivalence follows directly from Theorems \ref{thm:PraTri} and \ref{thm:strongcharac}.
		
	\end{proof}

	The following example demonstrates that not all statements in the above theorem are equivalent.
	
	\begin{example}[\mbox{\cite[Chapter 5, Example 1]{Dobos98}}]
		
		Let $F:[0,+\infty) \rightarrow[0,+\infty)$ be given as
		$$
		F(x)= \begin{cases}x & \text { if } x \leq 2, \\ 1+\frac{1}{x-1} & \text { if } x>2 ,\end{cases}
		$$
		for all $x\in [0,+\infty).$ It is shown in \cite{Dobos98} that $F$ is a metric aggregation function (on products). Moreover, by Theorem \ref{thm:strongcharac}, it is a strongly metric aggregation function on products. Nevertheless, $F$ is not monotone so it is not a strongly quasi-(pseudo)metric aggregation function on products.

	\end{example}
	
	\begin{remark}\label{rem:Iuncountable}
		It is well-known (see, for example, \cite[Theorem 22.3]{Willard}) that the product topology of the Cartesian product of a family of metric spaces is metrizable if and only if all but a countable number of these metric spaces are single points.
		
		This is also true for (quasi-)(pseudo)metric spaces since they are first countable. 
		
		Then if $|I|>\aleph_0,$ there does not exist any function $F:[0,+\infty)^I\to [0,+\infty)$ that satisfies the statements of Theorem \ref{thm:ChactStronglyProd}. 
	\end{remark}
	
	Figure \ref{fig:products} summarizes the relationships among the families of aggregation functions on products considered so far. Each region corresponds to a family of aggregation functions, and its label indicates the underlying structure being aggregated.
	
	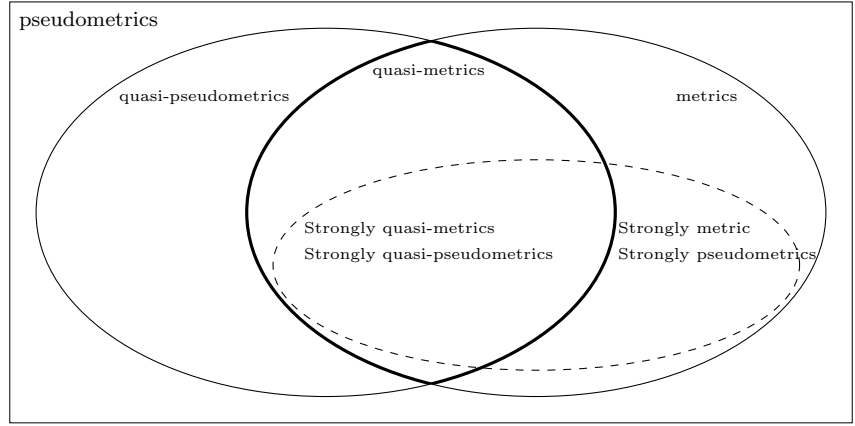
\begin{figure}[htbp]
		\centering
		
		\begin{tikzpicture}[font=\footnotesize,scale=0.7]
			
			\draw[black,line width=0.01pt] (0,0) rectangle (16,8);
			\node[anchor=north west] at (0,8) {pseudometrics};
			
			\draw (6,4) ellipse (5.5cm and 3.5cm);
			\node[anchor=north west] at (1.9,6.5) {{\tiny quasi-pseudometrics}};
			
			\draw (10,4) ellipse (5.5cm and 3.5cm);
			\node[anchor=north east] at (14,6.5) {{\tiny metrics}};
			
			\begin{scope}
				\clip (6,4) ellipse (5.5cm and 3.5cm);
				\draw[very thick] (10,4) ellipse (5.5cm and 3.5cm);
			\end{scope}
			
			\begin{scope}
				\clip (10,4) ellipse (5.5cm and 3.5cm);
				\draw[very thick] (6,4) ellipse (5.5cm and 3.5cm);
			\end{scope}
			
			
			\draw[dashed] (10,3) ellipse (5cm and 2cm);
			\node[anchor=north east] at (9.2,7) {{\tiny quasi-metrics}};
			
			\node[anchor=north east,align=left] at (10.5,4)
			{{\tiny Strongly quasi-metrics }\\
				{\tiny Strongly quasi-pseudometrics}};
			
			\node[anchor=north east,align=left] at (15.5,4)
			{{\tiny Strongly metric }\\
				{\tiny Strongly pseudometrics}};
			
		\end{tikzpicture}
		\caption{Relations between aggregation functions on products}\label{fig:products}
	\end{figure}

	%
	\section{Strongly quasi-pseudometric aggregation functions on sets}\label{sec4}
	%
	
	We now turn to the question of how topology is preserved by quasi-pseudometric aggregation functions on sets. To clarify, we first define what we mean by topology preservation in this context. In the case of quasi-pseudometric aggregation functions on products, we have identified the conditions under which the aggregated quasi-pseudometric is compatible with the product topology. However, for quasi-pseudometric aggregation functions on sets, we must consider a topology defined on a single fixed set instead of a Cartesian product. As noted in \cite{PRL21}, the supremum topology is a natural and suitable candidate for this purpose.

	More precisely, let \( X \) denote a nonempty set and  and $\{d_i:i\in I\}$ a family of quasi-pseudometrics  on $X.$ Consider the mapping $\mathcal{I}:X\to X^I$ given by $\mathcal I(x)=(x)_{i\in I}$, for all $x\in X.$ When $X$ is endowed with the supremum topology $\bigvee_{i \in I} \mathscr{T}(d_i)$  and $X^I$ with the product topology, the map \( \mathcal{I} \) becomes a topological embedding  of $X$ into the diagonal $\Delta=\{x\in X^I: x_j=x_k\text{ for all }j,k\in I\}.$   This observation leads us to the following definition.
	
	\begin{definition}
		A (quasi-)(pseudo)metric aggregation function on sets $F:[0,+\infty)^I$$\to [0,+\infty)$ is said to be a \textbf{strongly (quasi-)(pseudo)metric aggregation function on sets} if for every nonempty set $X$ and every family $\{d_i:i\in I\}$ of (quasi-)(pseudo)metrics on $X$ then
		$$\bigvee_{i \in I} \mathscr{T}(d_i)=\mathscr{T}(F\circ d_\Delta).$$
	\end{definition}
	
	We next observe the following straightforward relationship between strongly quasi-pseudometric aggregation functions on products and on sets.
	
	\begin{proposition}\label{prop:products_implies_sets}
		If $F:[0,+\infty)^I$$\to [0,+\infty)$ is a strongly (quasi-)(pseudo)metric aggregation function on products then it is a strongly (quasi-)(pseudo)metric aggregation function on sets.
	\end{proposition}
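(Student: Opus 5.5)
The plan is to combine two facts: aggregation on products implies aggregation on sets, and the diagonal embedding $\mathcal{I}$ carries the product picture down to the supremum topology.

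\emph{Step 1: $F$ aggregates on sets.} In all four cases (quasi-pseudometric, pseudometric, quasi-metric, metric), $F$ must first be shown to be a (quasi-)(pseudo)metric aggregation function on sets. For pseudometrics and quasi-pseudometrics this is Theorems \ref{thm:PraTri} and \ref{thm:FBRL}. For quasi-metrics and metrics I would argue directly. Given quasi-metrics $d_i$ on $X$, note that $F\circ d_\Delta(x,y)=F\circ d_\Pi(\mathcal I(x),\mathcal I(y))$ for the family $\{(X,d_i):i\in I\}$. Since $F\circ d_\Pi$ is a (quasi-)metric on $X^I$, its restriction to the diagonal is one too, and $\mathcal I$ is injective. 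Hence $F\circ d_\Delta$ is a (quasi-)metric on $X$, being the pullback of a (quasi-)metric along an injection. Symmetry, when required, is inherited the same way.

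\emph{Step 2: transfer the topology equality to $X$.} Fix a nonempty set $X$ and a family $\{d_i:i\in I\}$ of the relevant kind on $X$. Take $X_i=X$ for every $i$. By hypothesis,
$$\prod_{i\in I}\mathscr T(d_i)=\mathscr T(F\circ d_\Pi) \quad\text{on } X^I.$$
Both topologies are then pulled back along $\mathcal I$.
\begin{itemize}
\item On the metric side, the identity $F\circ d_\Delta=(F\circ d_\Pi)\circ(\mathcal I\times\mathcal I)$ gives $B_{F\circ d_\Delta}(x,\varepsilon)=\mathcal I^{-1}\big(B_{F\circ d_\Pi}(\mathcal I(x),\varepsilon)\big)$. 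So $\mathscr T(F\circ d_\Delta)$ is the initial topology induced by $\mathcal I$ from $\mathscr T(F\circ d_\Pi)$.
\item On the product side, $\mathcal I^{-1}\big(\pi_j^{-1}(U)\big)=U$ for $U\in\mathscr T(d_j)$. Hence the initial topology induced by $\mathcal I$ from $\prod_{i}\mathscr T(d_i)$ is generated by $\bigcup_j\mathscr T(d_j)$, which is exactly $\bigvee_{i}\mathscr T(d_i)$.
\end{itemize}
Since the two topologies on $X^I$ coincide, their initial topologies under $\mathcal I$ coincide, giving $\bigvee_i\mathscr T(d_i)=\mathscr T(F\circ d_\Delta)$.

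\emph{Main obstacle.} The only point needing care is the first bullet: that the topology of the pulled-back (quasi-)pseudometric equals the subspace, i.e.\ initial, topology. It suffices to compare bases. The balls $\mathcal I^{-1}\big(B_{F\circ d_\Pi}(\mathcal I(x),\varepsilon)\big)$ are $F\circ d_\Delta$-balls. Conversely, for an open set $V$ of $X^I$ and a point $\mathcal I(x)\in V$, some ball $B_{F\circ d_\Pi}(\mathcal I(x),\varepsilon)$ lies in $V$, so $\mathcal I^{-1}(V)$ is $F\circ d_\Delta$-open. This is routine and completes the plan.
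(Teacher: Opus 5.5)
Your proof is correct and follows the same route as the paper: both topologies are pulled back along the diagonal embedding $\mathcal I$, with $\mathscr{T}(F\circ d_\Delta)$ and $\bigvee_{i}\mathscr{T}(d_i)$ identified as the initial (equivalently, subspace-on-$\Delta$) topologies coming from $\mathscr{T}(F\circ d_\Pi)$ and $\prod_i\mathscr{T}(d_i)$ respectively. Your Step 1 checks that $F$ is actually an aggregation function on sets, which the definition of the strong notion on sets requires; this is a detail the paper's proof leaves implicit.
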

	
	\begin{proof}
		Let $X$ be a set  and let $\{d_i:i\in I\}$ be a family of (quasi-)(pseudo)metrics on $X.$ By assumption the product topology $\prod_{i\in I} \mathscr{T}(d_i)$ on $X^I$ coincides with $\mathscr{T}(F\circ d_\Pi).$ Then the restriction of $\prod_{i\in I} \mathscr{T}(d_i)$ to $\Delta$ is isomorphic to $\bigvee_{i \in I} \mathscr{T}(d_i).$ Since $\mathcal I$ is an isomorphism from $(X,\mathscr{T}(F\circ d_\Delta))$ onto $(\Delta,\mathscr{T}(F\circ d_\Pi)|_\Delta)$, the proof follows.
	\end{proof}
	
	The following example shows that, when dealing with (quasi-)(pseudo)metric aggregation functions on sets, one cannot obtain a result analogous to Proposition \ref{prop:strongqmaf}.
	
	\begin{example}\label{example:diffsets}
		Let $P_2:[0,+\infty)^2\to [0,+\infty)$ be the projection over the second coordinate. Obviously, $P_2$ is a (quasi-)(pseudo)metric aggregation function on sets.
		
		On the real line $\mathbb{R}$, consider the discrete metric $d_1$ and the Euclidean metric $d_2$ on $\mathbb{R}.$ Then 
		$$\mathscr{T}(d_1)\bigvee \mathscr{T}
		(d_2)=\mathscr{T}(d_1)\not\subseteq \mathscr{T}(P_2\circ d_\Delta)=\mathscr{T}(d_2).$$
	\end{example}
	
	In this context, an analogous for Proposition \ref{prop:strongqmaf} is the following result which resembles Proposition \ref{prop:sqpmafp_prod_inclusion}.
	
	\begin{proposition}\label{prop:sqpmafs_sup_in_agg}
		Let $F : [0,+\infty)^I \to [0,+\infty)$ be a quasi-pseudometric aggregation function on sets. Then $\bigvee_{i \in I} \mathscr{T}(d_i) \subseteq \mathscr{T}(F\circ d_\Delta),$ for every family $\big\{(X, d_i) : i \in I\big\}$ of quasi-pseudometric spaces, if and only if 
		$F^{-1}(0)=\{0_I\}.$
		
		The statement remains valid if the term ``quasi-pseudometric'' is replaced by ``quasi-metric'' or ``pseudometric''.
	\end{proposition}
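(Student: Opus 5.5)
The plan is to mirror the proof of Proposition \ref{prop:sqpmafp_prod_inclusion}, exploiting the fact that on sets all the $d_i$ live on the same $X$. By Theorem \ref{thm:FBRL}, a quasi-pseudometric aggregation function on sets is subadditive and monotone with $F(0_I)=0$, and more generally satisfies $F(a)\le F(b)+F(c)$ whenever $a\le b+c$. For the pseudometric case, Theorem \ref{thm:PraTri} says $F$ preserves triangle triplets, which is all we shall need. For the quasi-metric case, note that a quasi-metric aggregation function on sets is in particular a quasi-pseudometric one on sets, because we may add a fixed quasi-metric coordinate. It is safer, however, to argue directly, as explained below.

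\emph{Sufficiency.} Assume $F^{-1}(0)=\{0_I\}$. Fix a subbasic open set $B_{d_j}(x,\varepsilon)$ of $\bigvee_{i\in I}\mathscr{T}(d_i)$ and a point $z$ in it. Choose $r>0$ with $B_{d_j}(z,r)\subseteq B_{d_j}(x,\varepsilon)$. Let $e\in[0,+\infty)^I$ have $e_j=2r$ and all other coordinates $0$, and set $\delta=F(e)/2>0$. If $F(d_\Delta(z,y))<\delta$ but $d_j(z,y)\ge r$, then $e\le d_\Delta(z,y)+d_\Delta(z,y)$ componentwise. Moreover $(e,d_\Delta(z,y),d_\Delta(z,y))$ is a triangle triplet, since $d_\Delta(z,y)\le e+d_\Delta(z,y)$ trivially. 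Hence $F(e)\le 2F(d_\Delta(z,y))<2\delta$, a contradiction. So $B_{F\circ d_\Delta}(z,\delta)\subseteq B_{d_j}(z,r)$. Since the subbasic sets generate the supremum topology and $\mathscr{T}(F\circ d_\Delta)$ is a topology, the inclusion follows. For quasi-metrics I would use that a quasi-metric aggregation function on sets satisfies the needed inequality $F(e)\le 2F(a)$ when $e\le 2a$. To justify this, I would realize $e$ and $a$ as distance vectors on a three-point set, with a quasi-metric on each coordinate, and apply the triangle inequality for $F\circ d_\Delta$. This realization is the point to check carefully.

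\emph{Necessity.} Suppose $F(a)=0$ with $a_j\neq 0$. As in Proposition \ref{prop:sqpmafp_prod_inclusion}, monotonicity (or the triangle triplet $(b,a,a)$) gives $F(b)=0$, where $b$ keeps only the coordinate $a_j$. For the (quasi-)pseudometric versions, take $X=\{p,q\}$, let $d_j$ be the discrete metric scaled by $a_j$, and let $d_i$ be indiscrete for $i\ne j$. Then $F\circ d_\Delta\equiv 0$ is indiscrete, while $\{p\}\in\mathscr{T}(d_j)$, contradicting the inclusion.

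The main obstacle is the quasi-metric case, where indiscrete coordinates are forbidden. There I would take $d_i=a_i\cdot\rho$ for $i\ne j$ with $a_i>0$, and $d_j=a_j\rho$, where $\rho$ is the discrete metric on $\{p,q\}$. Coordinates with $a_i=0$ cause trouble, since a zero coordinate is not a quasi-metric. The fix I would try is to pad those coordinates with a distance realizing $0$ in one direction only. On $X=\{p,q\}$, set $d_i(p,q)=0$ and $d_i(q,p)=1$, a genuine quasi-metric. Then $d_\Delta(p,q)=a$, so $F(d_\Delta(p,q))=0$ and $q$ lies in every $F\circ d_\Delta$-ball around $p$. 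Meanwhile $B_{d_j}(p,a_j)=\{p\}$ is open in the supremum topology, which yields the contradiction. This handles every case uniformly.
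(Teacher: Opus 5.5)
Your proof is correct and follows the paper's argument. The two-point constructions for necessity are the paper's; your preliminary reduction from $a$ to $b$ in the (quasi-)pseudometric case is harmless, though the paper simply takes $d_i=a_i\cdot d_D$ for every $i$. Your sufficiency argument is the adaptation of Proposition~\ref{prop:strongqmaf} that the paper only alludes to.

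The three-point realization you flag does work. On $\{u,v,w\}$ set $\rho_i(u,v)=\rho_i(v,w)=a_i$, $\rho_i(u,w)=e_i$, and all three reverse distances equal to $a_i+1$; these are quasi-metrics whenever $e\le 2a$. This fills in the quasi-metric sufficiency, which Theorems~\ref{thm:PraTri} and~\ref{thm:FBRL}, the results the paper cites there, do not cover directly.
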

	
	\begin{proof} We prove the necessary condition by contradiction. So suppose that we can find $a\in [0,+\infty)^I$ verifying $F(a)=0$ but there exists $j \in I$ with $a_j \neq 0$.  
		
		Let $X = \{p,q\}$ be a set with two elements, and for each $i \in I$, consider
		\[
		d_i = a_i \cdot d_D,
		\]
		where $d_D$ is the discrete metric on $X$.  
		
		Then $\big\{(X,d_i): i \in I\big\}$ is a family of pseudometric spaces, so
		\[
		\bigvee_{i\in I} \mathscr{T}(d_i) \subseteq \mathscr{T}(F\circ d_\Delta).
		\]
		Let $\varepsilon = \tfrac{a_j}{2}>0$.  
		Then there exists $\delta_\varepsilon > 0$ such that
		\[
		B_{F\circ d_\Delta}(p, \delta_\varepsilon) \subseteq B_{d_j}(p,\varepsilon).
		\]
		
		Since
		\[
		F\circ d_\Delta(p,q) = F\big((d_i(p,q))_{i \in I}\big) = F\big((a_i)_{i \in I}\big) = 0,
		\]
		then $q \in B_{F\circ d_\Delta}(p,\delta_\varepsilon) \subseteq B_{d_j}(p,\varepsilon)$, that is
		\[
		d_j(p,q) = a_j\cdot d_D(p,q) = a_j < \varepsilon = \tfrac{a_j}{2},
		\]
		which is a contradiction. Hence, $a=0_I.$
		
		The previous proof applies to quasi-pseudometric and pseudometric aggregation functions on sets. 
		For quasi-metric aggregation functions on sets, let $J = \{i \in I : a_i = 0\}$, and let $X = \{p,q\}$ as above. 
		Define, for each $j \in J$,
		\[
		d_j(x,y) =
		\begin{cases}
			0, & \text{if } x = y \text{ or } (x,y)=(p,q),\\
			1, & \text{otherwise}.
		\end{cases}
		\]
		for every $x,y\in X.$ For $i \in I \setminus J$, set $d_i = a_i \cdot d_D$ as before. 
		The proof then proceeds exactly as in the previous case.

		The sufficiency follows from a straightforward adaptation of the proof of Proposition \ref{prop:strongqmaf}, together with Theorems \ref{thm:PraTri}, \ref{thm:FBRL}.
	\end{proof}
	
	It is natural to wonder whether the previous Proposition also holds for metric aggre\-gation functions on sets. The answer is no as the next example demonstrates (compare with \cite[Example 7]{MayorVale19}).
	
	\begin{example}\label{example:smafs}
		Let $F:[0,+\infty)^2\to [0,+\infty)$ be defined as
		$$F(a)=\begin{cases}
			0&\text{ if }a_1\cdot a_2=0,\\
			1&\text{ if }a_1\neq 0,a_2\neq 0.		
		\end{cases}$$
		for every $a=(a_1,a_2)\in [0,+\infty)^2.$ Given a nonempty set $X$ and two metrics $\{d_1,d_2\}$ on $X$ then 
		$$(F\circ d_\Delta)(x,y)=\begin{cases}
			1&\text{ if }x\neq y,\\
			0&\text{ if }x=y,
		\end{cases}$$
		for all $x,y\in X.$ Hence $F\circ d_\Delta$ is the discrete metric. Therefore, $F$ is a metric aggregation function on sets. Moreover, since $\mathscr{T}(F\circ d_\Delta)$ is the discrete topology then 
		$$\mathscr{T}(d_1)\bigvee \mathscr{T}(d_2)\subseteq \mathscr{T}(F\circ d_\Delta).$$
		However, $F^{-1}(0)=\big\{(a_1,a_2)\in\mathbb{R}^2:a_1\cdot a_2=0\big\}\neq \big\{(0,0)\big\}.$
		
	\end{example}
	
	Therefore, Proposition \ref{prop:sqpmafs_sup_in_agg} is not true for metric aggregation functions on sets. The primary reason for this is that if we have a family of metrics  $\{d_i : i \in I\}$ on a nonempty set $X$, 
	then the image $\mathrm{Im}d_\Delta$ of the function $d_\Delta$ is contained within the set
	$\{0_I\} \cup (0,+\infty)^I.$ As a result, the values of $F$ that fall outside this range are irrelevant (they can be 0).
	Therefore, a different characterization is necessary, which will require the following concept.

	\begin{definition}[\mbox{\cite[Definition 6.2.4]{BeerBook}}] A multifunction $h: X \rightrightarrows Y$ between two topological spaces is said to be \textbf{upper semicontinuous} at $x \in X$ if for every open subset $G$ of $Y$ containing $h(x)$ we can find an open set $O$ containing $x$ such that $h(o) \subseteq G$ for every $o \in O$.
\end{definition}

\begin{proposition}\label{prop:necessary_strongmaf}
	Let $F: [0,+\infty)^I \to [0,+\infty)$ be a {\rm (}quasi-{\rm )}{\rm (}pseudo{\rm )}metric aggregation function on sets. Endow $[0,+\infty)$ with the left-order topology having as base all intervals of the form $[0,\varepsilon)$, $\varepsilon>0.$ Let $X$ be a nonempty set and $\{d_i:i\in I\}$ be a family of {\rm (}quasi-{\rm )}{\rm (}pseudo{\rm )}metrics on $X$.  If  $\bigvee_{i \in I} \mathscr{T}(d_i) \subseteq \mathscr{T}(F\circ d_\Delta)$ then $F|_{d_\Delta x}^{-1}$ is upper semicontinuous at $0$ for every $x\in X,$ where  $F|_{d_\Delta x}$ denotes the restriction of $F$ to $\mathrm{Im}\big(d_\Delta(x,\cdot)\big)$.
\end{proposition}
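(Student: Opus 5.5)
Fix $x\in X$ and write $A_x=\mathrm{Im}\big(d_\Delta(x,\cdot)\big)=\{d_\Delta(x,y):y\in X\}\subseteq[0,+\infty)^I$. The multifunction in question is $h:=F|_{d_\Delta x}^{-1}:[0,+\infty)\rightrightarrows A_x$, given by $h(t)=\{a\in A_x: F(a)=t\}$. Here $[0,+\infty)$ carries the left-order topology and $A_x$ the subspace topology inherited from the product topology of $[0,+\infty)^I$.

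We must show the following. For every product-open set $G\subseteq[0,+\infty)^I$ with $h(0)\subseteq G$, there is $\delta>0$ such that $h(t)\subseteq G$ for all $t\in[0,\delta)$. Equivalently, every $a\in A_x$ with $F(a)<\delta$ lies in $G$. Every open set of the left-order topology containing $0$ contains some $[0,\delta)$, so it suffices to produce this $\delta$.

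The first step is to describe $h(0)$ and a convenient open neighbourhood of it. Note that $0_I=d_\Delta(x,x)\in A_x$ and $F(0_I)=0$, so $0_I\in h(0)$. The key is to use the hypothesis $\bigvee_{i\in I}\mathscr{T}(d_i)\subseteq\mathscr{T}(F\circ d_\Delta)$ directly, rather than trying to describe $G$ abstractly.

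Given $G\supseteq h(0)$ open, I want $G$ to contain a basic neighbourhood $U$ of $0_I$ of the form
\[
U=\{a: a_j<\varepsilon \text{ for all } j\in J\},
\]
with $J\subseteq I$ finite and $\varepsilon>0$. This holds since $0_I\in G$ and $G$ is open. (I will argue only using $0_I$; the other points of $h(0)$ are handled by the next step.)

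The second step finds $\delta$. The set $\bigcap_{j\in J}B_{d_j}(x,\varepsilon)$ is open in $\bigvee_{i\in I}\mathscr{T}(d_i)$ and contains $x$. By hypothesis it is $\mathscr{T}(F\circ d_\Delta)$-open, so there is $\delta>0$ with
\[
B_{F\circ d_\Delta}(x,\delta)\subseteq\bigcap_{j\in J}B_{d_j}(x,\varepsilon).
\]
Now take $t<\delta$ and $a\in h(t)$. Then $a=d_\Delta(x,y)$ for some $y\in X$ with $F\circ d_\Delta(x,y)=t<\delta$. Hence $d_j(x,y)<\varepsilon$ for all $j\in J$, that is, $a\in U\subseteq G$. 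Thus $h(t)\subseteq G$ for every $t\in[0,\delta)$, which is upper semicontinuity at $0$.

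This step also covers the remaining points of $h(0)$, including the case $t=0$ itself. Indeed, any $a\in h(0)$ is $d_\Delta(x,y)$ with $F\circ d_\Delta(x,y)=0<\delta$, so it already lies in $U$.

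The main subtlety, rather than a true obstacle, is the choice of topology on the codomain. The argument requires the product (pointwise) topology on $[0,+\infty)^I$ restricted to $A_x$, so that basic neighbourhoods of $0_I$ involve only finitely many coordinates, matching the subbase of the supremum topology. I would state this convention explicitly at the start, since it is what makes the finite set $J$ available.
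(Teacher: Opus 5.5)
Your proof is correct and is essentially the paper's argument. You shrink the given open set to a basic neighbourhood $\bigcap_{j\in J}\pi_j^{-1}\big([0,\varepsilon)\big)$ of $0_I$, use $\bigvee_{i\in I}\mathscr{T}(d_i)\subseteq\mathscr{T}(F\circ d_\Delta)$ to get $\delta>0$ with $B_{F\circ d_\Delta}(x,\delta)\subseteq\bigcap_{j\in J}B_{d_j}(x,\varepsilon)$, and then realize each $a\in F|_{d_\Delta x}^{-1}(t)$, $t<\delta$, as $d_\Delta(x,y)$; your only deviations are a common $\varepsilon$ instead of the $\varepsilon_j$ and product-open sets in place of relatively open subsets of $\mathrm{Im}\big(d_\Delta(x,\cdot)\big)$, both harmless.
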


\begin{proof}
	Suppose that  $\bigvee_{i \in I} \mathscr{T}(d_i) \subseteq \mathscr{T}(F\circ d_\Delta)$. Given $x\in X$,  let $V$ be an open set in the topology induced on $\mathrm{Im}\big(d_\Delta(x,\cdot)\big)$ by the product topology, that contains $F|_{d_\Delta x}^{-1}(0).$ Since $0_I\in F|_{d_\Delta x}^{-1}(0)\subseteq V$ we can find a basic open set in the induced product topology $G=\bigcap_{j\in J} \pi_j^{-1}
	\big([0,\varepsilon_j)\big)\cap \mathrm{Im}\big(d_\Delta(x,\cdot)\big),$ where $J\subseteq I$ is finite and $\varepsilon_j>0$ for all $j\in J,$  such that
	$0_I\in G\subseteq V.$
	By assumption there exists $\delta>0$ such that
	\[B_{F \circ d_\Delta}\left(x, \delta\right) \subseteq \bigcap_{j \in J} B_{d_j}\left(x, \varepsilon_j\right).\] 
	
	Let $\alpha\in [0,\delta)$. If $F|_{d_\Delta x}^{-1}(\alpha)=\varnothing$ then trivially $F|_{d_\Delta x}^{-1}(\alpha)\subseteq V.$ Otherwise, let $a\in F|_{d_\Delta x}^{-1}(\alpha).$ Then there exists $y_\alpha\in X$ such that $d_\Delta (x,y_\alpha)=\big(d_i(x,y_\alpha)\big)_{i\in I}=a.$ Then $F\big(d_\Delta(x,y_\alpha)\big)=F(a)=\alpha<\delta$ so $y_\alpha\in B_{F \circ d_\Delta}\left(x, \delta\right).$ Hence, $y_\alpha\in \bigcap_{j \in J} B_{d_j}\left(x, \varepsilon_j\right)$ so $d_j(x,y_\alpha)<\varepsilon_j$ for all $j\in J.$ Consequently, $a\in G\subseteq V.$ By the arbitrariness of $a\in F|_{d_\Delta x}^{-1}(\alpha),$ it follows that 
	$$F|_{d_\Delta x}^{-1}(\alpha)\subseteq G\subseteq V,$$ which shows that $F|_{d_\Delta x}^{-1}$ is upper semicontinuous at 0. 
\end{proof}

At this moment, it is natural to wonder whether we can find a nonempty set $X$ and a family of (quasi-)(pseudo)metrics $\{d_i:i\in I\}$ on $X$ such that $\mathrm{Im}\big(d_\Delta(x,\cdot)\big)=[0,+\infty)^I,$ for some $x\in X.$ The following results provide a positive answer in all cases except that of metrics. 

\begin{lemma}\label{lemma:ImQPM}
	There exist a family of {\rm(}quasi-{\rm)}pseudometric spaces $\big\{ (X,d_{i}) : i \in I\big\}$ and $x \in X$ such that
	$$\operatorname{Im}\big(d_\Delta(x,\cdot)\big)= [0,+\infty)^{I}.$$
\end{lemma}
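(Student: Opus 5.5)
Take the underlying set to be $X=[0,+\infty)^I$ itself, with base point $x=0_I$. For each $i\in I$ we want a pseudometric $d_i$ on $X$ with $d_i(0_I,y)=y_i$ for every $y\in X$. Then, for every $a\in[0,+\infty)^I$,
$$d_\Delta(0_I,a)=\big(d_i(0_I,a)\big)_{i\in I}=(a_i)_{i\in I}=a,$$
so $\operatorname{Im}\big(d_\Delta(0_I,\cdot)\big)=[0,+\infty)^I$.

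The natural candidate is the pullback of the Euclidean metric along the $i$-th projection:
$$d_i(y,z)=|y_i-z_i|\qquad\text{for all } y,z\in X.$$
This is a pseudometric on $X$, since it is the composition of the pseudometric $d_e$ on $[0,+\infty)$ with the map $\pi_i$. It is in particular a quasi-pseudometric, so a single construction covers both the pseudometric and the quasi-pseudometric versions of the statement. Moreover $d_i(0_I,y)=|0-y_i|=y_i$ because $y_i\geq 0$, which is exactly the property required above.

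It remains to note why each $d_i$ is only a pseudometric and why this is harmless here. When $|I|>1$, $d_i$ vanishes on pairs $y\neq z$ that agree in the $i$-th coordinate, so it is not a metric. The lemma, however, asks only for (quasi-)pseudometrics, so this is allowed. The corresponding construction fails for metrics, consistent with the remark preceding the lemma: if each $d_i$ were a metric, then $d_\Delta(x,y)$ would have to lie in $\{0_I\}\cup(0,+\infty)^I$.

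There is no genuine obstacle. The only point needing attention is ensuring that the image is the whole of $[0,+\infty)^I$ and not merely a subset, and the choice $X=[0,+\infty)^I$ together with $x=0_I$ makes the map $y\mapsto d_\Delta(0_I,y)$ the identity on $X$.
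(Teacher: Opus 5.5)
Your proposal is correct and is essentially the paper's proof: the paper also pulls back the Euclidean metric along the $i$-th projection and uses the base point $0_I$. The only difference is that the paper works on $X=\mathbb{R}^I$ rather than $[0,+\infty)^I$, which changes nothing in the argument.
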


\begin{proof}
	Let us consider the family $\big\{(\mathbb{R}^{I}, d_{i}) : i \in I \big\},$ where 
	$$d_{i}(x, y) = d_{e}(x_{i}, y_{i}) \text{ for every } x, y \in \mathbb{R}^{I},$$
	with \(d_{e}\) denoting the Euclidean metric on \(\mathbb{R}\). It is clear that \(d_{i}\) is a \mbox{(quasi-)}pseudometric.
	
	Now, given \(a \in [0, +\infty)^{I}\), we find that 
	$$d_{\Delta}(0_{I}, a) = (d_{i}(0, a_{i}))_{i \in I} = (a_{i})_{i \in I}.$$
	Thus, we have proven our claim.
	
\end{proof}

\begin{lemma}\label{lemma:ImQM}
	There exist a family of quasi-metric spaces $\big\{ (X,d_{i}) : i \in I\big\}$ and $x \in X$ such that
	$$\operatorname{Im}\big(d_\Delta(x,\cdot)\big)= [0,1)^{I}.$$
\end{lemma}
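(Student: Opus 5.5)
The plan is to adapt the construction of Lemma \ref{lemma:ImQPM}, but on the bounded set $X=[0,1)^I$ with base point $x=0_I$. The coordinatewise distances $d_e(y_i,z_i)$ used there cannot work here: they are only pseudometrics on $\mathbb{R}^I$, since two points agreeing in coordinate $i$ are at $d_i$-distance $0$. So I will make each $d_i$ separate points on its own, through the componentwise order, while keeping $d_i(0_I,y)=y_i$.

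For $i\in I$ and $y,z\in X$ I will define
\[
d_i(y,z)=\begin{cases} z_i-y_i & \text{if } y\leq z,\\ 1 & \text{otherwise,}\end{cases}
\]
where $\leq$ is the componentwise order on $[0,1)^I$. Clearly $d_i\geq 0$, $d_i(y,y)=0$, and $d_i\leq 1$ everywhere, because $z_i-y_i<1$ for points of $[0,1)^I$.

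\emph{Separation.} Fix $i$ and suppose $d_i(y,z)=d_i(z,y)=0$. Neither value is $1$, so $y\leq z$ and $z\leq y$, hence $y=z$. Thus each $d_i$ separates points on its own; this is the key difference from Lemma \ref{lemma:ImQPM}.

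\emph{Triangle inequality.} I need $d_i(y,z)\leq d_i(y,w)+d_i(w,z)$ for all $y,w,z\in X$.
\begin{itemize}
\item If either term on the right equals $1$, the inequality holds because $d_i\leq 1$.
\item Otherwise $y\leq w\leq z$. Then $y\leq z$ and $d_i(y,z)=z_i-y_i=(z_i-w_i)+(w_i-y_i)$, so equality holds.
\end{itemize}
So each $d_i$ is a quasi-metric on $X$.

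\emph{Image.} Every $y\in[0,1)^I$ satisfies $0_I\leq y$, so $d_\Delta(0_I,y)=(y_i)_{i\in I}=y$. Hence $\operatorname{Im}\big(d_\Delta(0_I,\cdot)\big)=[0,1)^I$.

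The only delicate point is the triangle inequality, specifically checking that the fallback value $1$ is never too small. This is exactly why the points are restricted to $[0,1)^I$: there every order-increment $z_i-y_i$ is below $1$, so the case split above closes without further work.
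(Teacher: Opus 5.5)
Your construction is correct and proves the lemma, but it takes a different route from the paper.

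\emph{The paper's route.} The paper works on all of $\mathbb{R}^I$ with the lower and upper quasi-metrics $l(s,t)=\max\{s-t,0\}$ and $u(s,t)=\max\{t-s,0\}$ of $\mathbb{R}$. It sets $d_i(x,y)=\sup\big(\{l(x_j,y_j):j\neq i\}\cup\{u(x_i,y_i)\}\big)\wedge 1$. Using $u$ in coordinate $i$ and $l$ in the other coordinates is what lets a single $d_i$ separate points; this is the role your componentwise order plays. The triangle inequality then comes for free, because suprema and truncations of quasi-pseudometrics are again quasi-pseudometrics. The truncation keeps the values finite when $I$ is infinite.

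\emph{Your route.} You encode the asymmetry directly through the componentwise order on the bounded set $[0,1)^I$, with fallback value $1$. You then check the axioms by a short case split. This is fully elementary and self-contained, whereas the paper's route reuses standard objects and general facts.

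\emph{What your choice of $X$ buys.} You get exact equality: $d_\Delta(0_I,\cdot)$ is the identity of $[0,1)^I$. In the paper's construction the truncated value $1$ is attained. For instance, the point with all coordinates $1$ is sent to $(1)_{i\in I}$. So the image there is really $[0,1]^I$, and the paper's argument only shows $[0,1)^I\subseteq\operatorname{Im}\big(d_\Delta(0_I,\cdot)\big)$. That inclusion is all Theorem \ref{thm:strongqpmafs_ag_in_sup} uses, but your version matches the statement as literally written. Restricting the paper's $d_i$ to $[0,1)^I$ would also give equality.
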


\begin{proof}
	Let us consider the following quasi-metrics on $\mathbb{R}:$
	$$l(x,y):=\max \lbrace x-y, 0 \rbrace,\hspace*{0.5cm}
	u(x,y):= \max \lbrace y-x, 0 \rbrace,$$
	for every $x,y\in\mathbb{R}.$
	Now, let \(\{ (\mathbb{R}^I, d_{i}) : i \in I \}\) be defined by
	$$d_{i}(x,y)=\sup\Big\{\big\{ l(x_{j},y_{j}) : j \in I-\lbrace i \rbrace \big\} \cup \big\{ u(x_{i},y_{i})\big\}\Big\} \wedge  1$$
	for every $x,y\in\mathbb{R}^I.$
	Given $a \in [0,1)^{I}$ then
	$$d_{i}(0_I,a)=a_{i}.$$
	This concludes the proof.
\end{proof}

\begin{problem}
	It remains an open question whether the previous lemmas hold for a family of metrics on a fixed set.
\end{problem}

From Proposition \ref{prop:necessary_strongmaf} and the previous lemmas we deduce the following.

\begin{corollary}
	Let $F: [0,+\infty)^I \to [0,+\infty)$ be a {\rm (}quasi-{\rm )}pseudometric aggregation function on sets. Endow $[0,+\infty)$ with the left-order topology having as base all intervals of the form $[0,\varepsilon)$, $\varepsilon>0.$ If $\bigvee_{i \in I} \mathscr{T}(d_i) \subseteq \mathscr{T}(F\circ d_\Delta),$ for every family of (quasi-)pseudometric spaces $\big\{(X,d_i):i\in I\big\},$ then $F^{-1}$ is upper semicontinuous at $0.$
	
	The result is also valid for quasi-metric aggregation functions on sets.
\end{corollary}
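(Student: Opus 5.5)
The plan is to apply Proposition \ref{prop:necessary_strongmaf} to a single, well-chosen family of (quasi-)pseudometrics, namely one for which the image of $d_\Delta(x,\cdot)$ is as large as possible. In the (quasi-)pseudometric case, Lemma \ref{lemma:ImQPM} gives a family $\big\{(X,d_i):i\in I\big\}$ and a point $x\in X$ with $\mathrm{Im}\big(d_\Delta(x,\cdot)\big)=[0,+\infty)^I$. By hypothesis the inclusion $\bigvee_{i\in I}\mathscr{T}(d_i)\subseteq\mathscr{T}(F\circ d_\Delta)$ holds for this family. Proposition \ref{prop:necessary_strongmaf} then yields that $F|_{d_\Delta x}^{-1}$ is upper semicontinuous at $0$. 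Since the domain of this restriction is all of $[0,+\infty)^I$, we have $F|_{d_\Delta x}=F$, and hence $F^{-1}$ is upper semicontinuous at $0$. This case should be essentially a one-line deduction.

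For the quasi-metric case, Lemma \ref{lemma:ImQM} only gives $\mathrm{Im}\big(d_\Delta(x,\cdot)\big)=[0,1)^I$. Proposition \ref{prop:necessary_strongmaf} then only tells us that the multifunction $\alpha\mapsto F^{-1}(\alpha)\cap[0,1)^I$ is upper semicontinuous at $0$. This gap is the main obstacle.

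I would first try to upgrade this local information by a direct argument. The hypothesis together with Proposition \ref{prop:sqpmafs_sup_in_agg} gives $F^{-1}(0)=\{0_I\}$, and by Theorem \ref{thm:FBRL} (quasi-metric aggregation functions on sets are quasi-pseudometric ones) $F$ is monotone and subadditive. Now let $V$ be an open set in $[0,+\infty)^I$ with the product topology containing $F^{-1}(0)=\{0_I\}$. Choose a basic neighbourhood $G=\bigcap_{j\in J}\pi_j^{-1}([0,\varepsilon_j))\subseteq V$ with $J$ finite, and shrink so that every $\varepsilon_j<1$. Applying the restricted upper semicontinuity to the open set $G\cap[0,1)^I$ produces some $\delta>0$.

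It then suffices to show that $F(a)<\delta$ forces $a\in G$, including for $a\notin[0,1)^I$. For this I would use monotonicity. Given such an $a$, set $a'_i=\min\{a_i,1/2\}$. Then $a'\in[0,1)^I$ and $F(a')\le F(a)<\delta$, so $a'\in G$, i.e.\ $\min\{a_j,1/2\}<\varepsilon_j$ for all $j\in J$. Choosing $\varepsilon_j\le1/2$ from the start makes this force $a_j<\varepsilon_j$, so $a\in G\subseteq V$.

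If this truncation argument has a flaw, the fallback is to rescale the quasi-metrics of Lemma \ref{lemma:ImQM} by a factor $\lambda$. This gives image $[0,\lambda)^I$, and we would take $\lambda$ large relative to the chosen neighbourhood; since the hypothesis holds for every family, this is legitimate.
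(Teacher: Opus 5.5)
Your treatment of the (quasi-)pseudometric case is exactly the paper's. The paper deduces the corollary in one line from Proposition \ref{prop:necessary_strongmaf} and Lemmas \ref{lemma:ImQPM} and \ref{lemma:ImQM}, and for the family of Lemma \ref{lemma:ImQPM} the restriction $F|_{d_\Delta x}$ is $F$ itself. For the quasi-metric case the paper adds nothing, and you are right that this does not suffice on its own. Lemma \ref{lemma:ImQM} only controls $F^{-1}(\alpha)\cap[0,1)^I$, whereas upper semicontinuity of $F^{-1}$ at $0$ also concerns points of $F^{-1}(\alpha)$ with large coordinates. Your truncation $a'=\big(\min\{a_i,\tfrac12\}\big)_{i\in I}$, together with $\varepsilon_j\le\tfrac12$ and monotonicity, closes this gap correctly, so here your argument is more complete than the paper's. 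Your rescaling fallback would not work by itself: the $\delta$ obtained depends on $\lambda$, and no single $\lambda$ covers $[0,+\infty)^I$. It is not needed, though.

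One step needs a different justification. Theorem \ref{thm:FBRL} is about quasi-pseudometric aggregation functions; it does not say that a quasi-metric aggregation function on sets is monotone. That fact is true, but you have to prove it. For example, given $a\le b$, on $X=\{x,y,z\}$ put $d_i(x,y)=b_i$, $d_i(x,z)=a_i$, $d_i(y,z)=0$ and $d_i(y,x)=d_i(z,x)=d_i(z,y)=b_i+1$. Each $d_i$ is a quasi-metric (one checks the six triangle inequalities, using $a_i\le b_i$). The triangle inequality for $F\circ d_\Delta$ at $x,y,z$ then gives $F(a)\le F(b)+F(0_I)=F(b)$.

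Once monotonicity and $F^{-1}(0)=\{0_I\}$ are available, you can bypass Lemma \ref{lemma:ImQM} and Proposition \ref{prop:necessary_strongmaf} altogether. Take $G=\bigcap_{j\in J}\pi_j^{-1}\big([0,\varepsilon_j)\big)$, and let $c^{(j)}$ be the point with $j$-th coordinate $\varepsilon_j$ and all other coordinates $0$. Then $\delta=\min_{j\in J}F(c^{(j)})>0$. Any $a\notin G$ satisfies $a\ge c^{(j)}$ for some $j\in J$, hence $F(a)\ge\delta$. This shortcut works in the quasi-metric and quasi-pseudometric cases. It does not work in the pseudometric case, where $F$ need not be monotone, so there the route through Lemma \ref{lemma:ImQPM} is genuinely needed.
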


For the converse of Proposition \ref{prop:necessary_strongmaf}, because of Proposition \ref{prop:sqpmafs_sup_in_agg} and Example \ref{example:smafs}, we focus on functions that aggregate metrics on sets.

\begin{proposition}\label{prop:sufficient_strongmaf}
	Let $F: [0,+\infty)^I \to [0,+\infty)$ be a metric aggregation function on sets. Endow $[0,+\infty)$ with the left-order topology. Let $X$ be a nonempty set and $\{d_i:i\in I\}$ be a family of metrics on $X$.  If  $F|_{d_\Delta x}^{-1}$ is upper semicontinuous at $0$ for every $x\in X,$ then  $\bigvee_{i \in I} \mathscr{T}(d_i) \subseteq \mathscr{T}(F\circ d_\Delta).$
\end{proposition}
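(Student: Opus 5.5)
To prove the inclusion it suffices to work with subbasic open sets of the supremum topology. I will show that for every $x\in X$, every $j\in I$ and every $\varepsilon>0$ there is $\delta>0$ with
\[
B_{F\circ d_\Delta}(x,\delta)\subseteq B_{d_j}(x,\varepsilon).
\]
Finite intersections of such balls form a neighbourhood base at $x$ for $\bigvee_{i\in I}\mathscr{T}(d_i)$. Taking the minimum of the finitely many corresponding $\delta$'s (or arguing directly with a finite set $J$) then gives the inclusion. So I will fix $x$, a finite $J\subseteq I$ and radii $\varepsilon_j>0$, and aim for a single $\delta$ with
\[
B_{F\circ d_\Delta}(x,\delta)\subseteq \bigcap_{j\in J}B_{d_j}(x,\varepsilon_j).
\]

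\emph{Step 1: the open set $V$.} Put $A=\mathrm{Im}\big(d_\Delta(x,\cdot)\big)$ with the topology induced by the product of left-order topologies, and define
\[
V=\bigcap_{j\in J}\pi_j^{-1}\big([0,\varepsilon_j)\big)\cap A .
\]
This set is open in $A$.

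\emph{Step 2: $V$ contains the zero fibre.} Because $F$ is a metric aggregation function on sets and each $d_i$ is a metric, $F\circ d_\Delta$ is a metric. So if $a=d_\Delta(x,y)\in A$ and $F(a)=0$, then $(F\circ d_\Delta)(x,y)=0$, which forces $y=x$ and hence $a=0_I$. Thus $F|_{d_\Delta x}^{-1}(0)=\{0_I\}\subseteq V$. This is exactly where the metric hypothesis is used. In the quasi-pseudometric setting the zero fibre could contain nonzero points outside $V$, which is why the statement is restricted to metrics.

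\emph{Step 3: upper semicontinuity.} The neighbourhoods of $0$ in the left-order topology are the intervals $[0,\delta)$. Upper semicontinuity of $F|_{d_\Delta x}^{-1}$ at $0$ therefore gives $\delta>0$ such that
\[
F|_{d_\Delta x}^{-1}(\alpha)\subseteq V\quad\text{for all }\alpha\in[0,\delta).
\]

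\emph{Step 4: conclusion.} Let $y\in B_{F\circ d_\Delta}(x,\delta)$ and set $a=d_\Delta(x,y)\in A$. Then $\alpha:=F(a)<\delta$, so $a\in F|_{d_\Delta x}^{-1}(\alpha)\subseteq V$. Hence $d_j(x,y)<\varepsilon_j$ for every $j\in J$, that is, $y\in\bigcap_{j\in J}B_{d_j}(x,\varepsilon_j)$. Since $x$ was arbitrary, every $\bigvee_{i\in I}\mathscr{T}(d_i)$-open set is $\mathscr{T}(F\circ d_\Delta)$-open.

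\emph{Expected obstacle.} The only delicate point is Step 2, namely identifying $F|_{d_\Delta x}^{-1}(0)$ as $\{0_I\}$ so that $V$ is a legitimate open set containing it. I also need to read the definition of upper semicontinuity correctly: $F|_{d_\Delta x}^{-1}$ is a multifunction from $[0,+\infty)$ into $A$, and its values on $[0,\delta)$ are the fibres over $\alpha<\delta$, which may be empty. Everything else follows Proposition \ref{prop:necessary_strongmaf} with the implications reversed.
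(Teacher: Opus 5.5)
Your proof is correct and follows the paper's argument step for step: the same relatively open set $V=\bigcap_{j\in J}\pi_j^{-1}\big([0,\varepsilon_j)\big)\cap\mathrm{Im}\big(d_\Delta(x,\cdot)\big)$, the same use of upper semicontinuity to obtain $\delta$, and the same inclusion $B_{F\circ d_\Delta}(x,\delta)\subseteq\bigcap_{j\in J}B_{d_j}(x,\varepsilon_j)$. The only difference is in establishing $F|_{d_\Delta x}^{-1}(0)=\{0_I\}$: the paper cites \cite[Theorem 12]{MayorVale19} together with $\mathrm{Im}\, d_\Delta\subseteq\{0_I\}\cup(0,+\infty)^I$, whereas you read it off directly from the fact that $F\circ d_\Delta$ is a metric, which is a slightly more self-contained route.
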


\begin{proof} 
	It is easy to check that  $\mathrm{Im} d_\Delta\subseteq \{0_I\}\cup (0,+\infty)^I$ so by \cite[Theorem 12]{MayorVale19} we deduce that $F|_{d_\Delta x}^{-1}(0)=\{0_I\}.$
	
	Let $J$ be a finite subset of $I$ and let $\bigcap_{j\in J} B_{d_j}(x,\varepsilon_j)$ be a basic open set in $\bigvee_{i \in I} \mathscr{T}(d_i),$ where $x\in X$ and $\varepsilon_j>0$ for all $j\in J.$

	Since 
	$$\{0_I\}=F|_{d_\Delta x}^{-1}(0)\subseteq \left( \bigcap_{j\in J}\pi_j^{-1}\big([0,\varepsilon_j)\big)\right)\bigcap \mathrm{Im}\big(d_\Delta(x,\cdot)\big)$$
	and  $F|_{d_\Delta x}^{-1}$ is upper semicontinuous at $0$ we can find $\delta>0$ such that if $\beta\in [0,\delta)$ then 
	$$F|_{d_\Delta x}^{-1}(\beta)\subseteq \left(\bigcap_{j\in J}\pi_j^{-1}\big([0,\varepsilon_j)\big)\right)\bigcap \mathrm{Im}\big(d_\Delta(x,\cdot)\big).$$
	
	Suppose that $y\in B_{F\circ d_\Delta}(x,\delta),$ that is, $F\big((d_i(x,y))_{i\in I}\big)<\delta$. Then $\big(d_i(x,y)\big)_{i\in I} \in F|_{d_\Delta x}^{-1}\Big(F\big((d_i(x,y))_{i\in I}\big)\Big)$ so
	$\big(d_i(x,y)\big)_{i\in I}\in \bigcap_{j\in J} \pi_j^{-1}\big([0,\varepsilon_j)\big)\cap \mathrm{Im}\big(d_\Delta(x,\cdot)\big).$ That is  $d_j(x,y)<\varepsilon_j$ for all $j\in J.$ This proves 
	$$B_{F\circ d_\Delta}(x,\delta)\subseteq \bigcap_{j\in J} B_{d_j}(x,\varepsilon_j).$$
\end{proof}

From Propositions \ref{prop:necessary_strongmaf} and \ref{prop:sufficient_strongmaf}, it is immediately deduced the following.

\begin{theorem}\label{thm:strongmafs_sup_in_ag}
	Let $F: [0,+\infty)^I \to [0,+\infty)$ be a metric aggregation function on sets. Endow $[0,+\infty)$ with the left order topology having as base all intervals of the form $[0,\varepsilon)$, $\varepsilon>0.$ Let $X$ be a nonempty set and $\{d_i:i\in I\}$ be a family of metrics on $X$.  Then  $\bigvee_{i \in I} \mathscr{T}(d_i) \subseteq \mathscr{T}(F\circ d_\Delta)$ if and only if $F|_{d_\Delta x}^{-1}$ is upper semicontinuous at $0$ for every $x\in X.$ 
\end{theorem}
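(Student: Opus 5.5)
The statement is a biconditional, and each direction has already been established in the preceding results, so the plan is simply to combine them for the fixed data. Fix a metric aggregation function on sets $F$, a nonempty set $X$, and a family $\{d_i:i\in I\}$ of metrics on $X$. The space $[0,+\infty)$ carries the left-order topology, and each $\mathrm{Im}\big(d_\Delta(x,\cdot)\big)$ carries the topology induced by the product topology on $[0,+\infty)^I$.

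For necessity, assume $\bigvee_{i\in I}\mathscr{T}(d_i)\subseteq\mathscr{T}(F\circ d_\Delta)$. Every metric is in particular a (quasi-)(pseudo)metric, and every metric aggregation function on sets falls within the scope of Proposition \ref{prop:necessary_strongmaf}. That proposition is stated for one fixed set and one fixed family, exactly as here. Applying it gives that $F|_{d_\Delta x}^{-1}$ is upper semicontinuous at $0$ for every $x\in X$. The one thing to check is that its proof never uses $F^{-1}(0)=\{0_I\}$. It does not: it only uses the inclusion of topologies to produce, for a basic neighbourhood $G$ of $0_I$, a radius $\delta$ such that $F|_{d_\Delta x}^{-1}(\alpha)\subseteq G$ whenever $\alpha<\delta$. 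So the hypothesis matches.

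For sufficiency, assume $F|_{d_\Delta x}^{-1}$ is upper semicontinuous at $0$ for every $x\in X$. Proposition \ref{prop:sufficient_strongmaf} then yields $\bigvee_{i\in I}\mathscr{T}(d_i)\subseteq\mathscr{T}(F\circ d_\Delta)$ directly.

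The only delicate point sits inside the sufficiency direction and is already handled in Proposition \ref{prop:sufficient_strongmaf}. Upper semicontinuity at $0$ is only useful if $F|_{d_\Delta x}^{-1}(0)$ lies inside every basic neighbourhood of $0_I$, which requires $F|_{d_\Delta x}^{-1}(0)=\{0_I\}$. This holds because $\mathrm{Im}\, d_\Delta\subseteq\{0_I\}\cup(0,+\infty)^I$ for a family of metrics, so the characterization \cite[Theorem 12]{MayorVale19} forces $F$ to be nonzero on $\mathrm{Im}\,d_\Delta\setminus\{0_I\}$. With that settled, the two propositions give the equivalence and no new argument is needed.
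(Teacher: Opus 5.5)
Your proposal is correct and matches the paper, which deduces the theorem immediately from Proposition \ref{prop:necessary_strongmaf} (necessity, which already covers metric aggregation functions on sets) and Proposition \ref{prop:sufficient_strongmaf} (sufficiency). Your observation that $F|_{d_\Delta x}^{-1}(0)=\{0_I\}$, because $\mathrm{Im}\, d_\Delta\subseteq\{0_I\}\cup(0,+\infty)^I$, is exactly the step the paper uses inside the proof of Proposition \ref{prop:sufficient_strongmaf}.
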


\begin{remark}
	Notice that the metric aggregation function on sets presented in Exam\-ple \ref{example:diffsets}, does not satisfy the hypothesis of Proposition \ref{prop:sufficient_strongmaf}. 
	Concretely, notice that for every $x\in\mathbb{R}$, $\mathrm{Im}(d_\Delta(x,\cdot))=\{(0,0)\}\cup (\{1\}\times (0,+\infty)).$ Then $P_2^{-1}(0)\cap \mathrm{Im}(d_\Delta(x,\cdot))=\{(0,0)\}.$ Thus, $\{(0,0)\}$ is open in the induced product topology and it contains $P_2|_{d_\Delta x}^{-1}(0).$ Nevertheless, for every open set $[0,\delta)$ that contains $0$, we have: 
	$$P_2|_{d_\Delta x}^{-1}\left(\tfrac{\delta}{2}\right)=\left(\mathbb{R}\times\left\{\tfrac{\delta}{2}\right\}\right)\cap \big(\{(0,0)\}\cup (\{1\}\times (0,+\infty))\big)=\left\{\left(1,\tfrac{\delta}{2}\right)\right\}\not\subseteq \{(0,0)\}.$$
	Therefore,   $P_2|_{d_\Delta x}^{-1}$ is not upper semicontinuous at $0$. However, observe that $P_2^{-1}$ is upper semicontinuous at $0$. 
\end{remark}

We will next address the problem of characterizing those (quasi-)(pseudo)metric aggregation functions on sets \( F \) for which \( \mathscr{T}(F \circ d_\Delta) \subseteq \bigvee_{i \in I} \mathscr{T}(d_i) \).

\begin{theorem}\label{thm:strongqpmafs_ag_in_sup}
	Let $F : [0,+\infty)^I \to [0,+\infty)$ be a {\rm(}quasi-{\rm)}pseudometric aggregation function on sets.  Endow $[0,+\infty)$ with the left order topology. Then, $\mathscr{T}(F\circ d_\Delta) \subseteq \bigvee_{i \in I} \mathscr{T}(d_i),$ for every family $\big\{d_i : i \in I\big\}$ of {\rm(}quasi-{\rm)}pseudometrics on a nonempty set $X$,  if and only if $F$ is continuous at $0_I.$
	
	The result also holds for quasi-metric aggregation functions on sets.
\end{theorem}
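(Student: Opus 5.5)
We follow the pattern of the proof of Theorem \ref{thm:strongqpmafp_ag_in_prod}, replacing the product construction by a single set that carries enough quasi-pseudometrics to realize every vector of distances; for this we use Lemmas \ref{lemma:ImQPM} and \ref{lemma:ImQM}. Continuity at $0_I$ is understood with $[0,+\infty)^I$ carrying the product topology and $[0,+\infty)$ the left order topology. Since $F(0_I)=0$ by Theorem \ref{thm:FBRL}, this simply says: for every $\varepsilon>0$ there are a finite $J\subseteq I$ and $\delta>0$ such that $a_j<\delta$ for all $j\in J$ implies $F(a)<\varepsilon$.

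\emph{Sufficiency.} Assume $F$ is continuous at $0_I$. Fix a nonempty set $X$, a family $\{d_i:i\in I\}$ of (quasi-)pseudometrics on $X$, a point $x\in X$ and $\varepsilon>0$, and take $J,\delta$ as above. If $y\in\bigcap_{j\in J}B_{d_j}(x,\delta)$, then $d_j(x,y)<\delta$ for all $j\in J$, hence $F\circ d_\Delta(x,y)<\varepsilon$. Thus
\[\bigcap_{j\in J}B_{d_j}(x,\delta)\subseteq B_{F\circ d_\Delta}(x,\varepsilon).\]
Now let $U\in\mathscr{T}(F\circ d_\Delta)$ and $x\in U$. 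Choose $\varepsilon$ with $B_{F\circ d_\Delta}(x,\varepsilon)\subseteq U$; the inclusion above shows that $U$ is a $\bigvee_{i\in I}\mathscr{T}(d_i)$-neighbourhood of $x$. Hence $U$ is open in the supremum topology. The argument works verbatim for quasi-metrics.

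\emph{Necessity.} For the (quasi-)pseudometric case take the family of Lemma \ref{lemma:ImQPM}: $X=\mathbb{R}^I$ with $d_i(x,y)=d_e(x_i,y_i)$, and the base point $0_I$. Given $\varepsilon>0$, the ball $B_{F\circ d_\Delta}(0_I,\varepsilon)$ is $\mathscr{T}(F\circ d_\Delta)$-open, so by hypothesis it is open in $\bigvee_{i\in I}\mathscr{T}(d_i)$. Hence there are a finite $J$ and $\delta>0$ with
\[\bigcap_{j\in J}B_{d_j}(0_I,\delta)\subseteq B_{F\circ d_\Delta}(0_I,\varepsilon).\]
(A basic neighbourhood of the supremum topology is a finite intersection of balls, and we take the minimum radius.) Now let $a\in[0,+\infty)^I$ with $a_j<\delta$ for all $j\in J$. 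Viewing $a$ as a point of $X$, we have $d_\Delta(0_I,a)=a$, so $a$ lies in the left-hand intersection. Therefore $F(a)=F\circ d_\Delta(0_I,a)<\varepsilon$, which is continuity at $0_I$.

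\emph{The quasi-metric case} is the only real obstacle. Lemma \ref{lemma:ImQM} realizes only the vectors in $[0,1)^I$, and it does not realize $[0,+\infty)^I$. This is still enough, because continuity at $0_I$ is a local property. We run the argument above with the quasi-metrics of Lemma \ref{lemma:ImQM} and base point $0_I$, and replace $\delta$ by $\min\{\delta,1\}$. A vector $a$ with $a_j<\min\{\delta,1\}$ for $j\in J$ may still have coordinates $a_i\ge 1$ for $i\notin J$, and these are not realized. To handle them, truncate: set $a'=a\wedge\tfrac12$ coordinatewise. Then $a'\in[0,1)^I$, so the argument gives $F(a')<\varepsilon$. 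We also need to pass from $a'$ back to $a$, and this uses monotonicity and subadditivity from Theorem \ref{thm:FBRL} (quasi-metric aggregation functions are quasi-pseudometric ones by Theorems \ref{thm:MayorValero} and \ref{thm:FBRL}). Write $a\le a'+c$, where $c$ has $c_j=0$ for $j\in J$ and $c_i=a_i$ otherwise. This gives $F(a)\le F(a')+F(c)$, so it remains to control $F(c)$, where $c$ is supported off $J$. Here I would first try to enlarge $J$ or to scale: by subadditivity $F(c)\le nF(c/n)$. If this route fails, the fallback is to modify the construction of Lemma \ref{lemma:ImQM}, replacing the truncation $\wedge 1$ by a bounded homeomorphic rescaling such as $t\mapsto t/(1+t)$ applied before the supremum, so that all of $[0,+\infty)^I$ is realized. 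I expect this realization issue in the quasi-metric case to be the main difficulty.
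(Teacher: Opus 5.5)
Your sufficiency argument and your necessity argument for (quasi-)pseudometrics are correct and coincide with the paper's. The gap is in the quasi-metric necessity. For finite $I$ nothing needs repairing: enlarge $J$ to $I$ and use radius $\min\{\delta,1\}$, which gives a neighbourhood of $0_I$ inside $[0,1)^I$. This is all the paper uses (``$[0,1)^I$ is a neighborhood of $0_I$''). For infinite $I$ you are right that this remark fails in the product topology, and the paper's proof passes over it. However, your primary repair cannot work, because the family of Lemma~\ref{lemma:ImQM} together with monotonicity and subadditivity carries too little information. On $I=\mathbb{N}$ let
\[
F(a)=\sum_{n\geq 1}2^{-n}\min\{a_n,1\}+\min\Big\{1,\sup_{n}2^{-n}a_n\Big\}.
\]
This $F$ is monotone and subadditive with $F^{-1}(0)=\{0_I\}$. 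It is therefore a quasi-metric aggregation function on sets: it is a quasi-pseudometric one by Theorem~\ref{thm:FBRL}, and $F^{-1}(0)=\{0_I\}$ gives separation. For the family of Lemma~\ref{lemma:ImQM}, every value of $d_\Delta$ lies in $[0,1]^{\mathbb{N}}$. There, $a_j<\delta$ for $j\leq N$ gives $F(a)\leq 2\delta+2^{1-N}$, so the inclusion $\mathscr{T}(F\circ d_\Delta)\subseteq\bigvee_{i}\mathscr{T}(d_i)$ does hold for that family. Yet $F(2^n e_n)\geq 1$ for every $n$, where $e_n$ is the $n$-th unit vector, and $2^ne_n$ lies in any basic neighbourhood $\bigcap_{j\in J}\pi_j^{-1}([0,\delta))$ once $n\notin J$. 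So $F$ is not continuous at $0_I$. The term $F(c)$ is therefore genuinely uncontrollable. Note also that $F(c)\leq nF(c/n)$ points the wrong way: it bounds $F(c)$ only by roughly $n\varepsilon$.

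What is missing is a family of quasi-metrics realizing all of $[0,+\infty)^I$, which is what your fallback aims at. As phrased, though, it is ambiguous: rescaling every term, including $u(x_i,y_i)$, gives $d_i(0_I,a)=a_i/(1+a_i)$, and the image is again $[0,1)^I$. Only the $l$-terms, which are the ones that can make the supremum infinite, should be bounded:
\[
d_i(x,y)=\sup\Big(\big\{l(x_j,y_j)\wedge 1: j\in I\setminus\{i\}\big\}\cup\big\{u(x_i,y_i)\big\}\Big).
\]
This $d_i$ has the following properties:
\begin{itemize}
\item it is finite, being at most $\max\{1,u(x_i,y_i)\}$;
\item it satisfies the triangle inequality, as a supremum of quasi-pseudometrics;
\item it separates points exactly as in Lemma~\ref{lemma:ImQM};
\item $d_i(0_I,a)=a_i$ for every $a\in[0,+\infty)^I$.
\end{itemize}
With this family your (quasi-)pseudometric argument runs verbatim. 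You need no truncation and no monotonicity or subadditivity, which also spares you the appeal to Theorem~\ref{thm:MayorValero}, a result about aggregation on products rather than on sets. The same construction strengthens Lemma~\ref{lemma:ImQM} to the full orthant and closes the corresponding gap in the paper's own proof for infinite $I$.
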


\begin{proof}
	To demonstrate the necessary condition for a (quasi-)pseudometric aggregation function on sets, we consider the family of (quasi-)pseudometrics spaces $\big\{(\mathbb{R}^I,d_i):i\in I\big\}$ introduced in the proof of Lemma \ref{lemma:ImQPM}.
	Let $\varepsilon>0.$ By hypothesis, there exist a finite subset $J$ of $I$ and $\delta_j>0$ for all $j\in J$ such that
	$$\bigcap_{j\in J} B_{d_j}(0_I,\delta_j)\subseteq B_{F\circ d_\Delta}(0_I,\varepsilon).$$
	Let $a\in \bigcap_{j\in J} \pi_j^{-1}\big([0,\delta_j)\big),$ which is an open set in the product topology containing $0_I.$ Since $\mathrm{Im}(d_\Delta(0_I,\cdot)) =[0,+\infty)^I,$ then there exists $y\in \mathbb{R}^I$ such that $d_\Delta (0_I,y)=a.$ Hence $d_j(0_I,y)=a_j<\delta_j$ for all $j\in J,$ which implies that 
	$$y\in \bigcap_{j\in J} B_{d_j}(0_I,\delta_j)\subseteq B_{F\circ d_\Delta}(0_I,\varepsilon).$$
	This leads to $F(a)=F\big(d_\Delta (0_I,y)\big)=(F\circ d_\Delta)(0_I,y)<\varepsilon$, so $F$ is continuous at $0_I.$
	
	In the case of a quasi-metric aggregation function on sets, to prove necessity, we consider the family of quasi-metric spaces constructed in the proof of Lemma \ref{lemma:ImQM}. We then proceed as above, noting that $[0,1)^I$ is a neighborhood of $0_I$.
	
	For proving sufficiency, let $\big\{d_i : i \in I\big\}$ be a family of quasi-pseudometrics on a nonempty set $X.$ Consider a basic open set $B_{F\circ d_\Delta}(x,\varepsilon)$ in $\mathscr{T}(F\circ d_\Delta)$, where $x\in X$ and $\varepsilon>0.$ Since $F$ is continuous at $0_I$ there exist a finite subset $J$ of $I$ and $\delta_j>0$ for all $j\in J$ such that whenever $a\in \bigcap_{j\in J} \pi_j^{-1}\big([0,\delta_j)\big),$ then $F(a)<\varepsilon.$
	
	A straightforward verification shows that
	$$\bigcap_{j\in J} B_{d_j}(x,\delta_j)\subseteq B_{F\circ d_\Delta}(x,\varepsilon),$$
	thus proving that $B_{F\circ d_\Delta}(x,\varepsilon)$ is open in $\bigvee_{i\in I}\mathscr{T}(d_i).$
\end{proof}

In the absence of a result analogous to lemmas \ref{lemma:ImQPM} and \ref{lemma:ImQM} for metrics, in the case of metric aggregation function on sets, the characterization of the inclusion $\mathscr{T}(F\circ d_\Delta) \subseteq \bigvee_{i \in I} \mathscr{T}(d_i)$ becomes less elegant for metric aggregation functions on sets.

\begin{theorem}\label{thm:strongmafs_ag_in_sup}
	Let $F : [0,+\infty)^I \to [0,+\infty)$ be a metric aggregation function on sets.  Endow $[0,+\infty)$ with the left order topology. Then $\mathscr{T}(F\circ d_\Delta) \subseteq \bigvee_{i \in I} \mathscr{T}(d_i)$ for every family $\big\{d_i : i \in I\big\}$ of metrics on a nonempty set $X,$  if and only if $F|_{d_\Delta x}$ is continuous at $0_I$ for all $x\in X,$  where  $F|_{d_\Delta x}$ denotes the restriction of $F$ to $\mathrm{Im}\big(d_\Delta(x,\cdot)\big)$.
\end{theorem}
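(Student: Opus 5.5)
The plan is to fix a nonempty set $X$ and a family $\{d_i:i\in I\}$ of metrics on $X$, and to prove that $\mathscr{T}(F\circ d_\Delta)\subseteq\bigvee_{i\in I}\mathscr{T}(d_i)$ holds exactly when $F|_{d_\Delta x}$ is continuous at $0_I$ for every $x\in X$. This is the same per-family reading as Theorem \ref{thm:strongmafs_sup_in_ag}, and it then gives the universally quantified version. We topologize $\mathrm{Im}\big(d_\Delta(x,\cdot)\big)$ with the topology induced by the product topology of $[0,+\infty)^I$, where each factor carries the left order topology. Two preliminary facts are needed.
\begin{itemize}
\item Since $F\circ d_\Delta$ is a metric, $F(0_I)=(F\circ d_\Delta)(x,x)=0$.
\item Because $F(0_I)=0$, continuity of $F|_{d_\Delta x}$ at $0_I$ into the left order topology means the following: for every $\varepsilon>0$ there are a finite $J\subseteq I$ and $\delta_j>0$ ($j\in J$) such that $a\in\mathrm{Im}\big(d_\Delta(x,\cdot)\big)$ and $a_j<\delta_j$ for all $j\in J$ imply $F(a)<\varepsilon$.
\end{itemize}
This reformulation is the working form of the hypothesis in both directions. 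It is the same mechanism as in Theorem \ref{thm:strongqpmafs_ag_in_sup}, except that $a$ now ranges only over $\mathrm{Im}\big(d_\Delta(x,\cdot)\big)$, since we cannot realize all of $[0,+\infty)^I$ with metrics.

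For necessity, assume $\mathscr{T}(F\circ d_\Delta)\subseteq\bigvee_{i\in I}\mathscr{T}(d_i)$, and fix $x\in X$ and $\varepsilon>0$. The ball $B_{F\circ d_\Delta}(x,\varepsilon)$ is open in the supremum topology and contains $x$. Hence there are a finite $J$ and $\delta_j>0$ with $\bigcap_{j\in J}B_{d_j}(x,\delta_j)\subseteq B_{F\circ d_\Delta}(x,\varepsilon)$. Now take $a\in\mathrm{Im}\big(d_\Delta(x,\cdot)\big)$ with $a_j<\delta_j$ for $j\in J$, and choose $y$ with $d_\Delta(x,y)=a$. Then $y$ lies in the intersection of balls, so $F(a)=(F\circ d_\Delta)(x,y)<\varepsilon$. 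This is precisely continuity of $F|_{d_\Delta x}$ at $0_I$.

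For sufficiency, it is enough to show that every ball $B_{F\circ d_\Delta}(z,\varepsilon)$ is a neighbourhood of its centre $z$ in $\bigvee_{i\in I}\mathscr{T}(d_i)$. The reason is that $F\circ d_\Delta$ is a metric: any $w\in B_{F\circ d_\Delta}(x,\varepsilon)$ satisfies $B_{F\circ d_\Delta}(w,\varepsilon')\subseteq B_{F\circ d_\Delta}(x,\varepsilon)$ with $\varepsilon'=\varepsilon-(F\circ d_\Delta)(x,w)$, by the triangle inequality. So each ball is a neighbourhood of each of its points once centres are handled, and is therefore open in the supremum topology. To handle a centre $z$, apply continuity of $F|_{d_\Delta z}$ at $0_I$ to get $J$ and $\delta_j$ as above. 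For $y\in\bigcap_{j\in J}B_{d_j}(z,\delta_j)$ the vector $d_\Delta(z,y)$ lies in $\mathrm{Im}\big(d_\Delta(z,\cdot)\big)$ and has $j$-th coordinates below $\delta_j$, so $F(d_\Delta(z,y))<\varepsilon$. This gives $\bigcap_{j\in J}B_{d_j}(z,\delta_j)\subseteq B_{F\circ d_\Delta}(z,\varepsilon)$.

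The only delicate points are the reduction from arbitrary points of a ball to its centre, which uses that $F\circ d_\Delta$ satisfies the triangle inequality, and remembering that the continuity hypothesis must be applied at each centre $z$ separately. This is why the statement requires continuity of $F|_{d_\Delta x}$ for all $x\in X$ rather than at a single point.
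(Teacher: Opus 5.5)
Your proof is correct and follows the paper's argument in both directions. For necessity you realize each $a$ in $\mathrm{Im}\big(d_\Delta(x,\cdot)\big)$ as $d_\Delta(x,y)$ and use a basic supremum neighbourhood $\bigcap_{j\in J}B_{d_j}(x,\delta_j)$ inside the $F\circ d_\Delta$-ball; for sufficiency you pull back a neighbourhood of $0_I$ to such an intersection. Your triangle-inequality reduction to ball centres only makes explicit what the paper leaves implicit when it concludes that the ball is open in $\bigvee_{i\in I}\mathscr{T}(d_i)$.
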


\begin{proof}
	To show the necessary condition, fix $x\in X$ and let $\varepsilon>0.$ By hypothesis, there exist a finite subset $J$ of $I$ and positive real numbers $\delta_j>0$ for all $j\in J$ such that
	$$\bigcap_{j\in J} B_{d_j}(x,\delta_j)\subseteq B_{F\circ d_\Delta}(x,\varepsilon).$$
	Let $a\in \bigcap_{j\in J} \pi_j^{-1}\big([0,\delta_j)\big)\cap \mathrm{Im}\big(d_\Delta(x,\cdot)\big),$ which is an open set in the induced product topology containing $0_I.$ Then there exists $y\in X$ such that $d_\Delta (x,y)=a.$ Hence $d_j(x,y)=a_j<\delta_j$ for all $j\in J$ so 
	$$y\in \bigcap_{j\in J} B_{d_j}(x,\delta_j)\subseteq B_{F\circ d_\Delta}(x,\varepsilon).$$
	Then $F|_{d_\Delta x}(a)=F\big(d_\Delta (x,y)\big)=(F\circ d_\Delta)(x,y)<\varepsilon$, so $F|_{d_\Delta x}$ is continuous at $0_I.$
	
	For proving sufficiency, consider a basic open set $B_{F\circ d_\Delta}(x,\varepsilon)$ in $\mathscr{T}(F\circ d_\Delta)$, where $x\in X$ and $\varepsilon>0.$ Since $F|_{d_\Delta x}$ is continuous at $0_I$ there exist a finite subset $J$ of $I$ and $\delta_j>0$ for all $j\in J$ such that whenever $a\in \bigcap_{j\in J} \pi_j^{-1}\big([0,\delta_j)\big)\cap \mathrm{Im}\big(d_\Delta(x,\cdot)\big),$ then $F|_{d_\Delta x}(a)<\varepsilon.$
	
	A straightforward verification leads to 
	$$\bigcap_{j\in J} B_{d_j}(x,\delta_j)\subseteq B_{F\circ d_\Delta}(x,\varepsilon)$$
	proving that $B_{F\circ d_\Delta}(x,\varepsilon)$ is open in $\bigvee_{i\in I}\mathscr{T}(d_i).$
\end{proof}

The following description of strongly (quasi-)pseudometric or quasi-metric aggregation functions on sets is a consequence of Propositions \ref{prop:products_implies_sets}, \ref{prop:sqpmafs_sup_in_agg} and Theorems \ref{thm:strongcharac}, \ref{thm:strongmafs_sup_in_ag}, \ref{thm:strongqpmafs_ag_in_sup}.

\begin{theorem}\label{thm:products=sets}
	Given a {\rm(}quasi-{\rm)}pseudometric aggregation function on sets $F : [0,+\infty)^I \to [0,+\infty),$ the following statements are equivalent:
	\begin{enumerate}[(1)]
		\item $F$ is a strongly {\rm(}quasi-{\rm)}pseudometric aggregation function on products;
		\item $F$ is a strongly {\rm(}quasi-{\rm)}pseudometric aggregation function on sets;
		\item $F^{-1}(0)=\{0_I\}$ and $F$ is continuous at $0_I.$
	\end{enumerate}

	The statement remains valid if the term ``quasi-pseudometric'' is replaced by ``quasi-metric''.
\end{theorem}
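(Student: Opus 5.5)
We prove the cycle (1) $\Rightarrow$ (2) $\Rightarrow$ (3) $\Rightarrow$ (1). The argument is the same for quasi-pseudometric, pseudometric and quasi-metric aggregation functions; only the earlier results cited change.

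For (1) $\Rightarrow$ (2) we apply Proposition \ref{prop:products_implies_sets} directly. Before doing so we check that $F$ is an aggregation function on products of the right type. In the quasi-pseudometric and pseudometric cases this follows from Theorems \ref{thm:FBRL} and \ref{thm:PraTri}, since aggregation on products and on sets coincide there. In the quasi-metric case (1) assumes the product version anyway. So the definitions make sense and the proposition transfers the strong property from products to sets.

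For (2) $\Rightarrow$ (3) we use both inclusions in the definition of a strongly aggregation function on sets.
\begin{itemize}
\item The inclusion $\bigvee_{i\in I}\mathscr{T}(d_i)\subseteq \mathscr{T}(F\circ d_\Delta)$ holds for every family. By Proposition \ref{prop:sqpmafs_sup_in_agg}, which is stated for quasi-pseudometrics, pseudometrics and quasi-metrics, this gives $F^{-1}(0)=\{0_I\}$.
\item The reverse inclusion $\mathscr{T}(F\circ d_\Delta)\subseteq\bigvee_{i\in I}\mathscr{T}(d_i)$ for every family gives continuity of $F$ at $0_I$ by Theorem \ref{thm:strongqpmafs_ag_in_sup}. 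The left order topology on the codomain matches ordinary continuity at $0_I$ here, because $F\ge 0$ and $F(0_I)=0$.
\end{itemize}
In the quasi-metric case, Theorem \ref{thm:strongqpmafs_ag_in_sup} relies on the family from Lemma \ref{lemma:ImQM}, whose image is $[0,1)^I$. This set is still a neighbourhood of $0_I$, so continuity at $0_I$ follows.

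For (3) $\Rightarrow$ (1) we want to invoke Theorem \ref{thm:strongcharac}, which needs $F$ to be a (quasi-)pseudometric aggregation function on products. In the quasi-pseudometric and pseudometric cases this comes from Theorems \ref{thm:FBRL} and \ref{thm:PraTri}. In the quasi-metric case it needs more care, and this is where I expect the main difficulty. A quasi-metric aggregation function on sets need not obviously satisfy the product conditions. The plan is as follows.
\begin{itemize}
\item First show that $F$ is a quasi-pseudometric aggregation function on sets. Given quasi-pseudometrics $d_i$ on $X$, consider $d_i'=d_i+\rho$, where $\rho$ is a suitable quasi-metric. Alternatively, apply the characterization of \cite{MinyaVale19} to obtain monotonicity and subadditivity from Theorem \ref{thm:FBRL}(3).
\item Once $F$ is monotone, subadditive and satisfies $F^{-1}(0)=\{0_I\}$, Theorem \ref{thm:MayorValero} shows that $F$ is a quasi-metric aggregation function on products.
\item Theorem \ref{thm:strongcharac} then yields (1).
\end{itemize}
If the route through \cite{MinyaVale19} works, it is the cleanest option. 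I would check it first, and fall back on the perturbation argument if needed.
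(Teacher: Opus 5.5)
Your cycle (1)$\Rightarrow$(2)$\Rightarrow$(3)$\Rightarrow$(1) uses the same results the paper cites: Propositions~\ref{prop:products_implies_sets} and \ref{prop:sqpmafs_sup_in_agg}, and Theorems~\ref{thm:strongqpmafs_ag_in_sup} and \ref{thm:strongcharac}. For the quasi-pseudometric and pseudometric versions it is complete.

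\textbf{The gap.} The quasi-metric version of (3)$\Rightarrow$(1) is not finished. You correctly note that Theorem~\ref{thm:strongcharac} needs $F$ to be a quasi-pseudometric aggregation function on products. The paper's citation-only proof does not spell this out either. However, you leave it as two unexecuted options. The route through \cite{MinyaVale19} is left unchecked. The perturbation route, with an unspecified quasi-metric $\rho$, does not work as stated. Replacing $d_i$ by $d_i+\rho$ shifts the vectors at which $F$ is evaluated by $\rho(x,z)$, $\rho(x,y)$, $\rho(y,z)$ in every coordinate. So the triangle inequality for the perturbed family is about $F$ at shifted arguments. Removing the shift by a limit would need continuity of $F$ away from $0_I$, which (3) does not give. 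That continuity would follow from monotonicity and subadditivity, which is exactly what you are trying to prove.

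\textbf{The missing step.} It is elementary: every $a\le b+c$ in $[0,+\infty)^I$ is realized by quasi-metrics on three points. Take distinct $x,y,z$ and, for each $i\in I$, set $d_i(u,u)=0$ and
\begin{itemize}
\item $d_i(x,y)=b_i$, $d_i(y,z)=c_i$, $d_i(x,z)=a_i$;
\item $d_i(y,x)=d_i(z,y)=d_i(z,x)=K_i:=1+a_i+b_i+c_i$.
\end{itemize}
The triangle inequalities with $K_i$ on the left also have $K_i$ on the right. The remaining ones are $a_i\le b_i+c_i$, $b_i\le a_i+K_i$ and $c_i\le K_i+a_i$, all of which hold. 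For distinct points, one of the two directed distances equals $K_i>0$, so each $d_i$ is a quasi-metric.

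Since $F\circ d_\Delta$ is a quasi-metric, $F(0_I)=0$ and $F(a)\le F(b)+F(c)$. By Theorem~\ref{thm:FBRL}, $F$ is then a quasi-pseudometric aggregation function on products. Theorem~\ref{thm:strongcharac}, (3)$\Rightarrow$(2), now gives (1).

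\textbf{Repairing your perturbation.} Your idea works if $\rho$ vanishes on the three relevant pairs. Fix distinct $x,y,z$; otherwise the inequality is trivial. Set $\rho(u,v)=0$ if $u=v$ or $(u,v)\in\{(x,y),(y,z),(x,z)\}$, and $\rho(u,v)=1$ otherwise. This $\rho$ is a quasi-metric, so each $d_i+\rho$ is a quasi-metric. Moreover, $d_i+\rho$ agrees with $d_i$ on exactly the pairs entering the triangle inequality at $x,y,z$.
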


For strongly metric aggregation functions on sets, we can provide the following characterization coming from Theorems \ref{thm:strongmafs_sup_in_ag} and \ref{thm:strongmafs_ag_in_sup}.

\begin{theorem}\label{thm:smafs}
	Given a metric aggregation function on sets $F : [0,+\infty)^I \to [0,+\infty),$ the following statements are equivalent:
	\begin{enumerate}[(1)]
		\item $F$ is a strongly metric aggregation function on sets;
		\item for every family $\big\{d_i : i \in I\big\}$ of metrics on a nonempty set $X,$ 
		\begin{enumerate}
			\item $F|_{d_\Delta x}^{-1}$ is upper semicontinuous at $0$ for every $x\in X;$ 
			\item  $F|_{d_\Delta x}$ is continuous at $0_I$ for all $x\in X;$
		\end{enumerate}
		where  $F|_{d_\Delta x}$ denotes the restriction of $F$ to $\mathrm{Im}\big(d_\Delta(x,\cdot)\big)$.
	\end{enumerate}
	
\end{theorem}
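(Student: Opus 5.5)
The plan is to unfold the definition of a strongly metric aggregation function on sets into two inclusions and treat each with the theorem already proved for it. By definition, $F$ is a strongly metric aggregation function on sets if and only if, for every nonempty set $X$ and every family $\{d_i:i\in I\}$ of metrics on $X$, both
$$\bigvee_{i\in I}\mathscr{T}(d_i)\subseteq \mathscr{T}(F\circ d_\Delta)\quad\text{and}\quad \mathscr{T}(F\circ d_\Delta)\subseteq \bigvee_{i\in I}\mathscr{T}(d_i)$$
hold. Since $F$ is assumed to be a metric aggregation function on sets, the hypotheses of Theorems \ref{thm:strongmafs_sup_in_ag} and \ref{thm:strongmafs_ag_in_sup} are met.

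For (1) $\Rightarrow$ (2), fix a nonempty set $X$ and a family of metrics $\{d_i:i\in I\}$ on $X$. The first inclusion holds for this family, so Theorem \ref{thm:strongmafs_sup_in_ag} (in fact only the direction given by Proposition \ref{prop:necessary_strongmaf}) yields (a). The second inclusion holds for this family too, so the necessity part of the proof of Theorem \ref{thm:strongmafs_ag_in_sup} yields (b) for that $X$ and that family.

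For (2) $\Rightarrow$ (1), fix again $X$ and the family. Condition (a) gives the first inclusion by Proposition \ref{prop:sufficient_strongmaf}. Condition (b) gives the second inclusion by the sufficiency argument in Theorem \ref{thm:strongmafs_ag_in_sup}. Together these give equality of the topologies for every family, which is (1).

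The only point needing care is quantifiers. Theorem \ref{thm:strongmafs_ag_in_sup} is phrased ``for every family'', while Theorem \ref{thm:strongmafs_sup_in_ag} is per family. However, the proofs of both directions in Theorem \ref{thm:strongmafs_ag_in_sup} work one fixed family at a time, so the pointwise per-family versions are what I actually invoke. I do not expect any genuine obstacle beyond stating this per-family reading explicitly.
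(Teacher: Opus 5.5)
Your proposal is correct and follows the paper's argument exactly: the paper obtains this theorem by combining Theorems \ref{thm:strongmafs_sup_in_ag} and \ref{thm:strongmafs_ag_in_sup}, one for each of the two inclusions that make up strongness. Your remark on quantifiers is sound, since both proofs do work one family at a time; but because (1) and (2) are each quantified over all families, the ``for every family'' reading of Theorem \ref{thm:strongmafs_ag_in_sup} would also suffice.
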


From our previous results and those of \cite{MayorVale19,MinyaVale19} we obtain the following:

\begin{theorem}\label{thm:ChactStronglySets}
	Given a function $F : [0,+\infty)^I \to [0,+\infty)$, consider the following statements:
	\begin{enumerate}[(1)]
		\item $F$ is a strongly quasi-metric aggregation function on products;
		\item $F$ is a strongly quasi-metric aggregation function on sets;
		\item $F$ is a strongly quasi-pseudometric aggregation function on products;
		\item $F$ is a strongly quasi-pseudometric aggregation function on sets;
		\item $F^{-1}(0)=\{0_I\},$ $F$ is subadditive, monotone  and continuous at $0_I;$
		\item $F$ is a strongly pseudometric aggregation function on products;
		\item $F$ is a strongly pseudometric aggregation function on sets;
		\item $F^{-1}(0)=\{0_I\},$ $F$ is continuous at $0_I$ and $\big(F(a),F(b),F(c)\big)$ is a triangle triplet whenever $(a,b,c)$ 
		is a triangle triplet;
		\item $F$ is a strongly metric aggregation function on sets;
		\item $F^{-1}(0)=\{0_I\},$ $\big(F(a),F(b),F(c)\big)$ is a triangle triplet whenever $(a,b,c)$ 
		is a triangle triplet, and  for every family $\big\{d_i : i \in I\big\}$ of metrics on a nonempty set $X,$ 
		\begin{enumerate}
			\item $F|_{d_\Delta x}^{-1}$ is upper semicontinuous at $0$ for every $x\in X;$ 
			\item  $F|_{d_\Delta x}$ is continuous at $0_I$ for all $x\in X;$
		\end{enumerate}
		where  $F|_{d_\Delta x}$ denotes the restriction of $F$ to $\mathrm{Im}\big(d_\Delta(x,\cdot)\big)$.
	\end{enumerate}
	
	Then
	$$\text{(1)} \Leftrightarrow \text{(2)}   \Leftrightarrow \text{(3)} \Leftrightarrow  \text{(4)}  \Leftrightarrow  \text{(5)} \Longrightarrow \text{(6)} \Leftrightarrow \text{(7)} \Leftrightarrow \text{(8)} \Longrightarrow \text{(9)} \Leftrightarrow \text{(10)}.$$
\end{theorem}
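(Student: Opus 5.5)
The plan is to assemble the chain of equivalences from results already established, working block by block.

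\emph{First block: (1)--(5).} The equivalences (1)$\Leftrightarrow$(3)$\Leftrightarrow$(5) are exactly (1)$\Leftrightarrow$(2)$\Leftrightarrow$(3) of Theorem \ref{thm:ChactStronglyProd}. For (2) and (4) I will use Theorem \ref{thm:products=sets}. It applies to quasi-pseudometric aggregation functions on sets and, by its last sentence, to quasi-metric ones. To invoke it I first need $F$ to be an aggregation function of the right kind on sets. By Theorem \ref{thm:FBRL}, quasi-pseudometric aggregation on products and on sets coincide. For quasi-metrics I will use the characterization of \cite{MinyaVale19}, which I expect to coincide with condition (3) of Theorem \ref{thm:MayorValero}: $F^{-1}(0)=\{0_I\}$, $F$ subadditive and monotone. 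This yields (1)$\Leftrightarrow$(2) and (3)$\Leftrightarrow$(4) directly from Theorem \ref{thm:products=sets}. The only point to watch is that the sets-version of quasi-metric aggregation agrees with the products-version; this is what the citation of \cite{MinyaVale19} supplies.

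\emph{Second block: (5)$\Rightarrow$(6)$\Leftrightarrow$(7)$\Leftrightarrow$(8).} The implication (5)$\Rightarrow$(6) and the equivalence (6)$\Leftrightarrow$(8) come from (3)$\Rightarrow$(4)$\Leftrightarrow$(5)$\Leftrightarrow$(6) of Theorem \ref{thm:ChactStronglyProd}.

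For (6)$\Rightarrow$(7), I use Proposition \ref{prop:products_implies_sets}.

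For (7)$\Rightarrow$(8), I proceed as follows:
\begin{enumerate}[(a)]
\item By Theorem \ref{thm:PraTri}, $F$ is a pseudometric aggregation function on products, so $F(0_I)=0$ and $F$ preserves triangle triplets.
\item The inclusion $\bigvee\mathscr T(d_i)\subseteq\mathscr T(F\circ d_\Delta)$ holds for all pseudometric families, so Proposition \ref{prop:sqpmafs_sup_in_agg} (pseudometric version) gives $F^{-1}(0)=\{0_I\}$.
\item The reverse inclusion gives continuity at $0_I$. Theorem \ref{thm:strongqpmafs_ag_in_sup} is stated for (quasi-)pseudometrics, and its necessity argument uses the family from Lemma \ref{lemma:ImQPM}, which consists of pseudometrics. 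So it applies verbatim.
\end{enumerate}

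\emph{Third block: (8)$\Rightarrow$(9)$\Leftrightarrow$(10).} For (8)$\Rightarrow$(9), Theorem \ref{thm:Dobos} shows that $F$ is a metric aggregation function on products. Then Proposition \ref{prop:products_implies_sets}, applied to (6)$\Leftrightarrow$(8) together with (4) of Theorem \ref{thm:ChactStronglyProd}, shows that $F$ is strongly metric on products and hence strongly metric on sets.

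For (9)$\Leftrightarrow$(10), Theorem \ref{thm:smafs} supplies the topological conditions (a) and (b). What remains is that being a metric aggregation function on sets is equivalent to $F^{-1}(0)=\{0_I\}$ plus preservation of triangle triplets. Here lies the main obstacle. The characterization of \cite{MayorVale19} only constrains $F$ on $\{0_I\}\cup(0,+\infty)^I$; Example \ref{example:smafs} shows that $F^{-1}(0)$ may be larger than $\{0_I\}$.

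\begin{itemize}
\item[\textbullet] (10)$\Rightarrow$(9) is easy. By Theorem \ref{thm:Dobos}, $F$ aggregates metrics on products, hence on sets, and Theorem \ref{thm:smafs} finishes the argument.
\item[\textbullet] For (9)$\Rightarrow$(10), I would first try to derive $F^{-1}(0)=\{0_I\}$ from the strong property, using metric families adapted to a given $a$ with $F(a)=0$ (for instance, two-point sets with $d_i=a_i d_D$ when $a\in(0,+\infty)^I$). Points $a$ with some zero coordinates cannot be realized by metrics, so here I would rely on how \cite{MayorVale19} treats them. If that fails, the honest reading is that (10) must be understood with $F$ restricted to $\{0_I\}\cup(0,+\infty)^I$, and I would check the precise statement of \cite[Theorem 12]{MayorVale19} to reconcile the two.
\end{itemize}
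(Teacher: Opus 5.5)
The step you leave open, (9)$\Rightarrow$(10), is a genuine gap, and it cannot be filled. As literally stated, this implication is false whenever $|I|\geq 2$, for exactly the reason you suspect. Take $I=\{1,2\}$ and let $F(a)=\max\{a_1,a_2\}$ if $a_1a_2\neq 0$ and $F(a)=0$ if $a_1a_2=0$. For metrics $d_1,d_2$ on $X$, every value of $d_\Delta$ lies in $\{0_I\}\cup(0,+\infty)^2$. Hence $F\circ d_\Delta=\max\{d_1,d_2\}$, a metric whose balls are $B_{d_1}(x,\varepsilon)\cap B_{d_2}(x,\varepsilon)$. So $\mathscr{T}(F\circ d_\Delta)=\mathscr{T}(d_1)\bigvee\mathscr{T}(d_2)$, and (9) holds. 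Yet $F(1,0)=0$, and the triangle triplet $\big((1,1),(1,0),(0,1)\big)$ is sent to $(1,0,0)$, which is not a triangle triplet. Both global requirements in (10) therefore fail, and the same example shows (9)$\not\Rightarrow$(8).

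Your plan of realizing a zero of $F$ through metric families cannot work for such $a$: vectors with some but not all coordinates zero never occur as values of $d_\Delta$ for metrics. Your fallback is the correct repair. In (10), replace ``$F^{-1}(0)=\{0_I\}$ and $F$ preserves triangle triplets'' by ``$F(0_I)=0$, $F(a)>0$ for all $a\in(0,+\infty)^I$, and $F$ preserves triangle triplets contained in $((0,+\infty)^I)^3$''. These conditions characterize metric aggregation functions on sets. Necessity comes from two-point spaces with $d_i=a_i d_D$ and from three-point metric spaces; sufficiency comes from $\mathrm{Im}\,d_\Delta\subseteq\{0_I\}\cup(0,+\infty)^I$. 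With this reading, (9)$\Leftrightarrow$(10) is exactly Theorem \ref{thm:smafs}, and (8)$\Rightarrow$(9) is unaffected.

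The rest of your assembly, (1)--(8) and (8)$\Rightarrow$(9), is the bookkeeping the paper intends and is sound; the paper itself only cites earlier results together with \cite{MayorVale19,MinyaVale19}. You should correct one claim, though: quasi-metric aggregation on sets and on products do \emph{not} coincide. The projection $P_2$ of Example \ref{example:diffsets} aggregates quasi-metrics on sets, but $P_2^{-1}(0)\neq\{0_I\}$, so by Theorem \ref{thm:MayorValero} it does not aggregate them on products.

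Your argument survives because it never uses the false direction:
\begin{itemize}
\item (1)$\Rightarrow$(2) only needs that aggregation on products implies aggregation on sets.
\item For (2)$\Rightarrow$(1), Theorem \ref{thm:products=sets} applies, since $F$ aggregates quasi-metrics on sets by the definition of (2).
\end{itemize}
The real input from \cite{MinyaVale19} hidden in that theorem is the following. Aggregating quasi-metrics on sets already forces $F(a)\leq F(b)+F(c)$ whenever $a\leq b+c$ (use three-point quasi-metric spaces with large reverse distances), and hence monotonicity and subadditivity. Combined with $F^{-1}(0)=\{0_I\}$ from Proposition \ref{prop:sqpmafs_sup_in_agg}, this makes $F$ a quasi-metric aggregation function on products.
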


\begin{remark}
	Using the theory of uniform spaces (see, for instance, \cite{KelleyBook}), we show that when the index set 
	$I$ is uncountable, it is impossible to construct a function $F:[0,+\infty)^I\to [0,+\infty)$ satisfying any of the assertions of the preceding theorem.
	
	Let $(X,\mathscr{T})$ be a completely regular topological space. It is well known that such a space is uniformizable, that is, there exists a uniformity $\mathcal{U}$ on $X$ whose induced topology $\mathscr{T}(\mathcal{U})$ coincides with $\mathscr{T}.$
	
	Moreover, a classical result states that a uniform space $(X,\mathcal{U})$ is pseudometri\-zable if and only if the uniformity $\mathcal{U}$ has a countable base \cite[Chapter 6, Theorem 13]{KelleyBook}. Consequently, not every uniformity is pseudometrizable. 
	
	Recall also that for any uniformity $\mathcal{U}$ on $X$ there exists a family of pseudometrics $\{d_i:i\in I\}$ on $X,$ called the \emph{gauge} of $\mathcal{U},$ such that $\mathcal{U}=\bigvee_{i\in I}\mathcal{U}_{d_i},$ where $\mathcal{U}_{d_i}$ denotes the uniformity generated by the pseudometric $d_i.$ Thus 
	$$\mathscr{T}(\mathcal{U})=\bigvee_{i\in I}\mathscr{T}(\mathcal{U}_{d_i})=\bigvee_{i\in I}\mathscr{T}(d_i).$$ 
	
	Now suppose that $|I|>\aleph_0$ and that $F:[0,+\infty)^I\to [0,+\infty)$ is a strongly quasi-pseudometric aggregation function on sets. Then $F$ is a strongly pseudometric aggregation function on sets. Let $(X,\mathcal{U})$ be a non-pseudometrizable uniform space, and let $\{d_i:i\in I\}$ be its gauge. For instance, if $(Y,\mathcal{U})$ is a non-trivial uniform space and $Z$ is uncountable, one may consider the product uniformity on $Y^Z.$ This uniformity induces the topology of pointwise convergence  and, by reasoning as in Remark \ref{rem:Iuncountable}, it is not pseudometrizable.
	
	It then follows that $F\circ d_\Delta$ is a pseudometric on $X$ whose induced topology coincides with the supremum topology $\bigvee_{i\in I}\mathscr{T}(d_i)=\mathscr{T}(\mathcal{U}),$ which contradicts the assumption that $\mathcal{U}$ is not pseudometrizable.
	
\end{remark}

\begin{problem}
	We do not know whether strongly metric aggregation functions on sets coincide with those on products.
\end{problem}


\begin{thebibliography}{10}
	
	\bibitem{ArnauCalabuigGonzalezSP24}
	R.~Arnau, J.~M. Calabuig, Á. González, and E.~A. Sánchez-Pérez,
	\emph{Moduli of continuity in metric models and extension of livability
		indices}, Axioms \textbf{13} (2024), no.~3, Paper No. 192.
	
	\bibitem{BeerBook}
	G.~Beer, \emph{Topologies on closed and closed convex sets}, vol. 268, Kluwer
	Academic Publishers, 1993.
	
	\bibitem{BorsikDobos81a}
	J.~Bors\'{\i}k and J.~Dobo\v{s}, \emph{Functions whose composition with every
		metric is a metric}, Math. Slovaca \textbf{31} (1981), no.~1, 3--12.
	
	\bibitem{BorsikDobos81b}
	J.~Bors\'{\i}k and J.~Dobo\v{s}, \emph{On a product of metric spaces}, Math. Slovaca \textbf{31}
	(1981), no.~2, 193--205.
	
	\bibitem{Corazza99}
	P.~Corazza, \emph{Introduction to metric-preserving functions}, Amer. Math.
	Monthly \textbf{106} (1999), no.~4, 309--323.
	
	\bibitem{Demir23}
	I.~Demir, \emph{Novel correlation coefficients for interval-valued fermatean
		hesitant fuzzy sets with pattern recognition application}, Turkish Journal of
	Mathematics \textbf{47} (2023), no.~1, 213--233.
	
	\bibitem{Dobos98}
	J.~Dobo\v{s}, \emph{Metric preserving functions}, \v Stroffek, Ko\v sice, 1998.
	
	\bibitem{FBRL25a}
	A.~Fructuoso-Bonet and J.~Rodr\'iguez-L\'opez, \emph{Aggregation functions as
		lax morphisms of quantales}, Fuzzy Sets and Systems \textbf{513} (2025),
	Paper No. 109395.
	
	\bibitem{KelleyBook}
	J.~L. Kelley, \emph{General topology}, Springer, 1955.
	
	\bibitem{MassaVale13}
	S.~Massanet and O.~Valero, \emph{On aggregation of metric structures: the
		extended quasi-metric case}, Int. J. Comput. Intelligent Systems \textbf{6}
	(2013), 115--126.
	
	\bibitem{MayorVale10}
	G.~Mayor and O.~Valero, \emph{Aggregation of asymmetric distances in {C}omputer
		{S}cience}, Information Sciences \textbf{180} (2010), 803--812.
	
	\bibitem{MayorVale19}
	G.~Mayor and O.~Valero, \emph{Metric aggregation functions revisited}, European J. Combin.
	\textbf{80} (2019), 390--400.
	
	\bibitem{MinyaVale19}
	J.-J. Mi{\~{n}}ana and O.~Valero, \emph{Characterizing quasi-metric aggregation
		functions}, Int. J. Gen. Syst. \textbf{48} (2019), no.~8, 890--909.
	
	\bibitem{Pap15}
	E.~Pap, \emph{Aggregation functions as a base for decision making}, Synthesis
	2015 - International Scientific Conference of IT and Business-Related
	Research, 2015, pp.~143--146.
	
	\bibitem{PRL21}
	T.~Pedraza and J.~Rodríguez-López, \emph{New results on the aggregation of
		norms}, Mathematics \textbf{9} (2021), no.~18, 2291.
	
	\bibitem{PraTri02}
	A.~Pradera and E.~Trillas, \emph{A note on pseudometrics aggregation}, Int. J.
	Gen. Syst. \textbf{31} (2002), no.~1, 41--51.
	
	\bibitem{LHSR22}
	A.~F. {Roldán López de Hierro}, M.~Sánchez, and C.~Roldán,
	\emph{Multi-criteria decision making involving uncertain information via
		fuzzy ranking and fuzzy aggregation functions}, Journal of Computational and
	Applied Mathematics \textbf{404} (2022), 113138.
	
	\bibitem{Sree47}
	T.~K. Sreenivasan, \emph{Some properties of distance functions}, J. Indian
	Math. Soc. (N.S.) \textbf{11} (1947), 38--43.
	
	\bibitem{BookTorraNaru}
	V.~Torra and Y.~Narukawa, \emph{Modeling decisions. information fusion and
		aggregation operators}, Springer Berlin, Heidelberg, 2017.
	
	\bibitem{Willard}
	S.~Willard, \emph{General topology}, Dover, 1970.
	
	\bibitem{Wilson35}
	W.~A. Wilson, \emph{On certain types of continuous transformations of metric
		spaces}, Amer. J. Math. \textbf{57} (1935), no.~1, 62--68.
	
\end{thebibliography}

\end{document}